\title{Symmetrization Techniques in Image Deblurring\thanks{%
Received... Accepted... Published online on... Recommended by....
}}
\author{Marco Donatelli\footnotemark[2]
        \and Paola Ferrari\footnotemark[2]
        \and Silvia Gazzola\footnotemark[3]}
\shorttitle{Symmetrization Techniques in Image Deblurring} 
\newcommand{\R}{{\mathbb R}}
\newcommand{\diff}{\rm{d}}
\newcommand{\bs}{{\mbox{\boldmath{$s$}}}}
\newcommand{\bxi}{{\mbox{\boldmath{$\xi$}}}}
\newcommand{\bb}{{\mbox{\boldmath{$b$}}}}
\newcommand{\bt}{{\mbox{\boldmath{$t$}}}}
\newcommand{\bx}{{\mbox{\boldmath{$x$}}}}
\renewcommand{\bb}{{\mbox{\boldmath{$b$}}}}
\newcommand{\by}{{\mbox{\boldmath{$y$}}}}
\newcommand{\bz}{{\mbox{\boldmath{$z$}}}}
\newcommand{\imm}{\mathrm{\hat{\imath}}}
\newcommand{\toep}{\mathcal{T}}
\newcommand{\cir}{\mathcal{C}}
\newcommand{\e}{\mathrm{e}}
\newcommand{\IRTools}{\textit{IR Tools}}
\begin{document}

\maketitle

\begin{center} {\em Dedicated to Lothar Reichel on his seventieth birthday}
\end{center}

\renewcommand{\thefootnote}{\fnsymbol{footnote}}

\footnotetext[2]{Dipartimento di Scienza e Alta Tecnologia, Università dell’Insubria, via Valleggio 11, 22100 Como, Italy
(\texttt{paola.ferrari@uninsubria.it, marco.donatelli@uninsubria.it}).}
\footnotetext[3]{Department of Mathematical Sciences, University of Bath, Bath, UK (\texttt{S.Gazzola@bath.ac.uk}).}

\begin{abstract}
This paper presents a couple of preconditioning techniques that can be used to enhance the performance of iterative regularization methods applied to image deblurring problems with a variety of point spread functions (PSFs) and boundary conditions. More precisely, we first consider the anti-identity preconditioner, which symmetrizes the coefficient matrix associated to problems with zero boundary conditions, allowing the use of MINRES as a regularization method. When considering more sophisticated boundary conditions and strongly nonsymmetric PSFs, the anti-identity preconditioner improves the performance of GMRES. We then consider both stationary and iteration-dependent regularizing circulant preconditioners that, applied in connection with the anti-identity matrix and both standard and flexible Krylov subspaces, speed up the iterations. A theoretical result about the clustering of the eigenvalues of the preconditioned matrices is proved in a special case. The results of many numerical experiments are reported to show the effectiveness of the new preconditiong techniques, including when considering the deblurring of sparse images. 
\end{abstract}

\begin{keywords}
 inverse problems, regularization, preconditioning, Toeplitz matrices, Krylov subspace methods
\end{keywords}

\begin{AMS}
65F08, 65F10, 65F22
\end{AMS}


\section{Introduction}\label{sec:intro}
We consider the restoration of blurred and noise-corrupted images in two space-dimensions. 
By assuming that the point spread function (PSF) is spatially-invariant, the blurring is modeled as a convolution of the form
\begin{equation}\label{eq:conv}
b(\bs)=[\mathcal{K}f](\bs)+\xi(\bs)=\int_{\R^2}h(\bs-\bt)x(\bt)\diff\bt+\xi(\bs), 
\qquad \bs \in \Omega \subset \R^2,
\end{equation}
where $b$ represents the observed (blurred and noisy) 
image, $x$ the (unknown) exact image, $h$ the PSF  with compact support, and $\xi$ the random noise.
The real-valued nonnegative functions $x$ and $b$ determine the light intensity of the desired and available images, respectively.
We assume that the PSF $h$, and thus the blurring phenomenon, is known.

Discretization of the above integral equation at equidistant nodes yields
\begin{equation}\label{underdet}
{b}_i=\sum_{j\in\mathbb{Z}^2}h_{i-j}x_j+\xi_i, \qquad i\in\mathbb{Z}^2,
\end{equation}
where the entries of the discrete images $\bb=[b_i]$ and $\bx=[x_j]$ represent the 
light intensity at each pixel and $\bxi=[\xi_i]$ models the noise-contamination at these 
pixels.
Moreover, the observed image is available only in a finite region, the field of view (FOV) corresponding to $i\in[1,n]^2$, 
which is assumed to be square only for notational simplicity. 
Therefore, when there are nonvanishing coefficients $h_i$ 
with $i\ne 0$, the measured intensities near the boundary are affected by
data outside the FOV depending on the support of the PSF. Thus the linear system of equations defined by (\ref{underdet}) 
is  underdetermined, since then there are $n^2$ constraints, while the number of unknowns 
required to specify the equations is larger. 
A meaningful solution of this underdetermined system can be determined in several
ways; see \cite{almeida13,SISC16} for discussions of this approach. Alternatively,
one can determine a linear system of equations with a square matrix,
\begin{equation}\label{original}
A\bx=\bb, \qquad A\in\mathbb{R}^{n^2\times n^2},\qquad \bx,\bb\in\R^{n^2},
\end{equation}
by imposing boundary conditions, where the $x_j$-values in \eqref{underdet} at pixels
outside the FOV are assumed to be certain linear combinations of values inside the FOV; see \cite{IP06,book}. 

Since the singular values of the discrete convolution operator $A$ gradually approach zero without a significant gap, $A$ is
ill-conditioned and may be numerically rank-deficient; the degree of ill-posedness depends on the decay of the PSF values, see equation \eqref{genfunct} and the analysis in Section \ref{sec:structured_matrices}. A linear system of equations \eqref{original} 
with a matrix of this kind is commonly referred as linear discrete ill-posed problem and requires regularization
 \cite{hansen}.

The structure of the matrix $A$ depends on the boundary conditions. For instance, by using zero boundary conditions we get a Block Toeplitz with Toeplitz Blocks (BTTB) matrix, by using periodic boundary conditions we get a Block Circulant
with Circulant Blocks (BCCB) matrix, while more sophisticated boundary conditions, as reflective, antireflective, and synthetic, give rise to more complex matrix structures \cite{IP06, jim}. 
Regardless of how complicated the structure of the matrix $A$ is, its matrix-vector product can always be computed in $O(n^2\log(n))$ by padding the vector according to the boundary conditions and then applying the circular convolution by fast Fourier transforms (FFTs), as implemented in the Matlab toolbox \IRTools\ \cite{ir}. Therefore, one generally resorts to iterative methods for the restoration of large images.

The adjoint of the convolution operator in \eqref{eq:conv} is the correlation operator
\begin{equation}\label{eq:corr}
[\mathcal{K}^*x](\bs)=\int_{\R^2}h(\bt-\bs)x(\bt)\diff\bt,
\end{equation}
where we have used the fact that $h$ is real-valued. 
Discretization of \eqref{eq:corr} with the same boundary conditions used for \eqref{original} can be simply obtained from the PSF rotated $180^\circ$. The resulting matrix is denoted as $A'$. Therefore, matrix-vector products with $A'$, i.e., the discretization of the adjoint operator, can be computed rotating the PSF and then applying the same procedure described above for $A$. 
This is the common implementation of the matrix-vector products with the adjoint operator of $A$ when zero or periodic boundary conditions are imposed.
Unfortunately, the matrix $A'$ could differ from $A^T$ when the imposed boundary conditions are not zero Dirichlet or periodic; see \cite{IP06} for details. In such cases, using solvers like CGLS or LSQR with $A^T$ replaced by $A'$ lacks theoretical justification, which makes it natural to explore the performance of other iterative methods that do not require the adjoint operator. Some recent strategies are based on the preconditioned Arnoldi method and nonstationary iterations \cite{SISC16,martin,david,silvia14}.

In this paper we compute a solution of \eqref{original} through iterative regularization methods, which should terminate when a desired approximation is obtained and before noise starts to show up in the solution and thus the restoration error grows (this is the so-called \emph{semi-convergence} phenomenon). For this reason, a reliable stopping criterion is crucial to obtain a good reconstruction. On one hand, 
preconditioning is usually applied to speed up the convergence of iterative methods replacing the linear system \eqref{original} by the following 
\begin{equation}\label{eq:prec}
PA\bx=P\bb \qquad \mbox{ or } \qquad
AP\bz=\bb, \; \bx=P\bz,
\end{equation}
where $P$ is the preconditioner that could be applied to the left or right side of the matrix $A$. 
If iterations are stopped by the statistically-inspired discrepancy principle, right preconditioning is preferred because it does not modify the noise statistics; see \cite{hansen,MR2584074}.
For discrete ill-posed problems, $P$ must be chosen carefully by avoiding clustering of eigenvalues in the so-called noise subspace and exacerbate semi-convergence, since the signal components in this subspace are usually dominated by noise \cite{HNPprec}. 
On the other hand, when the linear systems \eqref{eq:prec} are solved by a Krylov method, the approximate solution is computed in a different subspace and thus the choice of $P$ affects the quality of the restored image rather than speeding the convergence, or possibly it achieves both \cite{HNPprec,claudio,pietro}. In particular, to provide a good restoration, $P$ should symmetrize the operator $A$ and thus $P=A'$ is a favorable choice. This is the so-called reblurring strategy proposed in \cite{IP06} and later studied in \cite{david} in connection to Arnoldi methods. This approach has been further improved adding a clustering of the eigenvalues in the signal space to obtain a fast convergence \cite{pietro,martin}.

Symmetrization of Toeplitz and BTTB linear systems arising from well-posed problems was recently explored by Pestana and Wathen in \cite{doi:10.1137/140974213}. In detail, defining the anti-identity matrix $Y\in \mathbb{R}^{n^2 \times n^2}$ as
	\[
	Y=\begin{bmatrix}{}
	& & 1 \\
	& \iddots & \\
	1 &  & \end{bmatrix},
	\]
the matrix $YA$ is symmetric whenever $A$ is persymmetric, i.e., $YA=A^TY$, as in the case of Toeplitz and BTTB matrices. It follows that the
linear system \eqref{original} can be replaced by the equivalent linear system
\begin{equation}\label{eq:Ysys}
	YA\bx=Y\bb,
\end{equation}
which can be solved by methods that work with symmetric indefinite matrices, such as MINRES and MR-II. When $A$ is a BTTB matrix, as in the case of zero Dirichlet boundary conditions, preconditioning the linear system \eqref{eq:Ysys} by BCCB matrices has been proposed and studied independently in \cite{MR4304085} and \cite{FFHMS} proving the eigenvalue clustering at the two points $-1$ and~$1$. However, such a symmetrization strategy has never been explored for discrete ill-posed problems, where the preconditioner has to deal with the noise subspace.

In this paper, motivated by the importance of having an operator close to symmetric to generate the Krylov subspace in which to search for an approximate solution of a discrete ill-posed problem (see, for instance, \cite{MR2312499}), we investigate the symmetrization technique \eqref{eq:Ysys} for image deblurring problems.  More specifically, the contributions of this article are twofold.
Firstly, we consider zero Dirichlet boundary conditions so that $A$ is a BTTB matrix and we investigate the regularizing properties of 
MINRES, applied to the linear system \eqref{eq:Ysys}. For the symmetrized linear system, we then define a regularizing preconditioner $P$ for the matrix $YA$ combining the analysis in \cite{FFHMS} with the regularizing preconditioner used, for instance, in \cite{pietro,martin}. We prove that the spectrum of the preconditioned matrix $PYA$ is clustered at the three points $\{-1,0,1\}$. Preconditioned MR-II for deblurring astronomical images  has been previously investigated in \cite{HN96}, but using a different symmetrization strategy for PSFs close to symmetric, whereas our approach is also effective for strongly nonsymmetric PSFs such as the motion blur considered in the numerical results. 

The second contribution of this paper is to extend the proposal to generic boundary conditions and more sophisticated regularization methods, such as hybrid projection methods that enforce some sparsity in the computed solution 
\cite{newsurvey}. Since the matrix $A$ might not be persymmetric, $YA$ is not symmetric even though it is close to being symmetric. Thus MINRES is replaced by GMRES \cite{calvetti}. Since the regularizing preconditioner $P$ depends on a parameter, nonstationary preconditioning is explored together with flexible GMRES to avoid the a priori estimation of such parameter, as proposed in \cite{pietrospringer}. When enforcing some sparsity into the computed solution is appropriate, we adopt efficient algorithms for $1$-norm regularization based on an iteratively reweighted least squares, which handle the inverted weights as iteration-dependent preconditioners that modify the approximation subspace within methods based on the flexible Golub-Kahan decomposition (such as FLSQR \cite{MR4331956}) or methods based on the flexible Arnoldi decomposition (such as FGMRES \cite{silvia14}). This approach results in FGMRES and FLSQR methods where two iteration-dependent preconditioners ($P$ and the inverted weights) are sequentially applied at each iteration; to the best of our knowledge, such scheme has never been applied to FLSQR before. 

This paper is organised as follows. Section \ref{sec:structured_matrices} provides some background material on the links between boundary conditions for image deblurring problems and structured matrices appearing in the linear system formulation \eqref{original}, including their associated spectral decompositions. Section \ref{sec:reg_prec} describes the preconditioners considered in this paper and the spectral analysis of the preconditioned matrices. Section \ref{sec:itReg} summarizes the iterative regularization methods considered in this paper, and specifies the strategies adopted to precondition them. Section \ref{sec:numExp} displays the results of four different test problems, which show the performance of the new preconditioners applied in different settings. Section \ref{sec:end} presents some conclusions and outlines some possible extensions to the present work. 
\section{Boundary conditions and structured matrices}\label{sec:structured_matrices}
Let $h_{i,j}$ be the entries of the PSF, with $i,j \in \mathbb{Z}$, where $h_{0,0}$ is the central entry. Because of the compact support of the PSF, $h_{i,j}=0$ for $|i|$ or $|j|$ large enough, especially when $h_{i,j}$ is outside the FOV, i.e.,  $\min\{|i|,|j|\} \geq n$. 
Given the coefficients $h_{i,j}$, it is possible to associate to the PSF the so-called generating
function $f: [-\pi,\pi]^{2} \rightarrow \mathbb{C}$ as follows
\begin{equation}\label{genfunct}
f(\vartheta_1,\vartheta_2) = 
\sum_{i,j=-n+1}^{n-1} h_{i,j}\e^{\imm (i \vartheta_{1}+j \vartheta_{2})}, \qquad 
\imm^2 = -1.
\end{equation}
Note that $h_{i,j}$ are the Fourier coefficients of the function $f$.

The structure of the matrix $A\in \mathbb{R}^{n^2 \times n^2}$ depends on the coefficients $h_{i,j}$ and the imposed boundary conditions. 
When the exact image has a black background, as for instance in astronomical imaging, zero boundary conditions are to be preferred. In such case
\[
A=\left(
\begin{array}{cccc}
T_{0} & T_{-1} & \cdots  & T_{-n+1} \\
T_{1} & \ddots  & \ddots  & \vdots  \\
\vdots  & \ddots  & \ddots  & T_{-1} \\
T_{n-1} & \cdots  & T_{1} & T_{0}%
\end{array}%
\right) _{n^{2}\times n^{2}},
\]
with
\[T_{k}=\left(
\begin{array}{cccc}
h_{k,0} & h_{k,-1} & \cdots  & h_{k,-n+1} \\
h_{k,1} & \ddots  & \ddots  & \vdots  \\
\vdots  & \ddots  & \ddots  & h_{k,-1} \\
h_{k,n-1} & \cdots  & h_{k,1} & h_{k,0}%
\end{array}%
\right) _{n\times n}, \qquad k=-n+1, \dots, n-1,
\]
which is a BTTB matrix. In this case we use the notation
\begin{equation}\label{Abttb}
A=\toep_n(f),
\end{equation}
where $f$ is the generating function defined in \eqref{genfunct}. 
If the PSF is not quadrantally symmetric, i.e., symmetric in both horizontal and vertical direction, then $A$ is not symmetric. On the other hand, it is always persymmetric independently of the PSF.

By imposing boundary conditions different from the zero Dirichlet ones, the matrix $A$ is no longer BTTB, but small norm and/or small rank corrections are added to $\toep_n(f)$ depending on the support and the decay rate of the PSF. Matrix-vector products with $A$ can always be computed in $O(n^2\log(n))$ operations by padding the vector according to the boundary conditions and then applying the circular convolution as in \IRTools\ \cite{ir}. 

Among the various kinds of boundary conditions, the periodic ones are computationally attractive, since the resulting matrix $A$ is BCCB and can be diagonalized by FFTs \cite{book}. The BCCB matrix $A$ associated to a PSF, and thus its generating function $f$ defined in \eqref{genfunct}, will be denoted by  
$A=\cir_n(f).$
The main property of such matrix is its spectral decomposition in terms of the Fourier matrix. 
Let $F_{1}$  be the discrete Fourier matrix defined as $[F_{1}]_{i,j}=\frac{1}{\sqrt{n}}\e^{- \imm \frac{2 \pi ij}{n}}$, for $i,j=0,\dots,n-1$.
The two-dimensional Fourier matrix is defined by tensor product as $F_{2} = F_{1} \otimes F_{1}$ and matrix-vector products with $F_2$ can be computed in $O(n^2\log(n))$ by FFT.
The eigenvalues $\lambda_j$, with $j=1,\dots,n^2$, of $\cir_n(f)$, can be computed by applying $F_{2}$ to the first column of $\cir_n(f)$, which is obtained stacking a proper permutation of the PSF; see \cite{book} for details.
In this way, the matrix $A$ can be factorized as 
 \[
 A=\cir_n(f)=F_{2}^H\Lambda F_{2}, 
 \]
where  $\Lambda$ is the diagonal matrix of the eigenvalues $\lambda_j$, with $j=1,\dots,n^2$.
Note that $\cir_n(f)$ is the Strang preconditioner of $\toep_n(f)$ and the $n^2$ eigenvalues of $\cir_n(f)$ can also be written as 
\begin{equation}\label{eq:cij}
\lambda_{i+jn+1}=f\left(\frac{2\pi i}{n},\frac{2\pi j}{n}\right), \qquad i,j=0,\dots,n-1,
\end{equation}
where $f$ is the generating function $\eqref{genfunct}$; see \cite{Davis}. 

The solution of the linear system \eqref{original} by Tikhonov regularization gives rise to the minimization problem
\[
\min_{\bx\in\R^{n^2}}\|A\bx-\bb\|^2+\alpha\|\bx\|^2,
\]
where $\alpha>0$ is a regularization parameter and $\| \cdot \|$ denotes the Euclidean norm. This minimization problem has the unique solution
\[
\bx_\alpha=(A^TA+\alpha I)^{-1}A^T\bb.
\]
In the case of periodic boundary conditions, since  $A=\cir_n(f)$, the Tikhonov solution $\bx_\alpha$ can be computed by applying three FFTs as
\[
\bx_\alpha=\cir_n(p_\alpha)\bb,
\]
where 
\begin{equation}\label{eq:ftik}
	p_\alpha(\vartheta_1,\vartheta_2)=\frac{\overline{f(\vartheta_1,\vartheta_2)}}{|f(\vartheta_1,\vartheta_2)|^2+\alpha}
\end{equation}
and thus the eigenvalues of $\cir_n(p_\alpha)$ are
\[
	\frac{\overline{\lambda_j}}{|\lambda_j|^2+\alpha}, \qquad j=1,\dots,n^2.
\]

\section{Regularizing preconditioner for symmetrized BTTB matrices}\label{sec:reg_prec}
Starting from the seminal paper \cite{HNPprec}, regularizing preconditioners have been largely investigated to speed up the convergence of iterative regularization methods without sploiling their reconstructions \cite{claudio,pietro,david}. The idea behind these approaches is to cluster the eigenvalues in the signal subspace without modifying the noise subspace. Moreover, the linear system with the preconditioned matrix has to be solved with a computational cost comparable to the matrix-vector product with the matrix $A$. Therefore, a common choice for the preconditioning of $A$, independently of the imposed boundary conditions, is to use the circulant Tikhonov operator $\cir_n(p_\alpha)$, where $p_\alpha$ is defined in \eqref{eq:ftik}.

We introduce the eigenvalues distribution that will be useful to study the clustering of the eigenvalues of the preconditioned matrices. 
By the Szeg\"o-Tilli theorem \cite{tilli}, a distribution
relation holds for the eigenvalues of the matrix sequence $\{\toep_n(g)\}_n$ {for a real-valued $g~\in~L^1([-\pi,\pi]^2)$}:
\begin{equation}\label{eq:spectral_distribution}
\lim_{n\to\infty}\frac1{n^2}\sum_{j=1}^{n^2}F[\lambda_j(\toep_n(g))]
=\frac1{(2\pi)^2}\int_{[-\pi,\pi]^2}F(g(\theta_1,\theta_2))\,{\rm d}\theta_1\,{\rm d}\theta_2,
\qquad\forall F\in C_c(\mathbb{C},\mathbb{C}),
\end{equation}
where $C_c(\mathbb{C},\mathbb{C})$ denotes the space of continous functions $F: \mathbb{C} \rightarrow \mathbb{C}$.
The function $g$ is called the \emph{symbol} of the Toeplitz family
and we write 
\[
\{\toep_n(g)\}_{n}\sim_\lambda g.
\]
The informal meaning behind the above distribution result is the following.	
If $g$ is continuous and $n$ is large enough, then the spectrum of $\toep_n(g)$ `behaves' like a uniform sampling of $g$ over $[-\pi,\pi]^2.$ It follows that $\cir_n(g)$ is a common preconditioner for $\toep_n(g)$ due to its eigenvalue distribution, cfr. equation \eqref{eq:cij}.

Imposing accurate boundary conditions, as reflective or antireflective, the coefficient matrix is a BTTB matrix
type up to a `small' correction. It turns out that, independently of the imposed boundary conditions, the matrices associated to the PSF have the same symbol $f$ like their Toeplitz part.
	
Since the PSF performs an average of neighboring pixels, we have 
\[ \sum_{i,j=-n+1}^{n-1} h_{i,j} = 1, \qquad h_{i,j} \geq 0.\]
Therefore, the function $|f|$ has maximum at the origin and then decays, not necessarily uniformly, reaching the minimum in $[\pi,\pi]$.
For instance, for the PSF used in Example~\ref{ssec:satellite}, plotted in Figure \ref{fig:symbol_plot} (a), Figure \ref{fig:symbol_plot} (b) depicts the behavior of $|f|$ as described above.
Therefore, the generating function $f$ satisfies the assumption of the following theorem.
\begin{figure}
	\begin{center}
	\begin{subfigure}[c]{0.3\textwidth}
		\includegraphics[width=\textwidth]{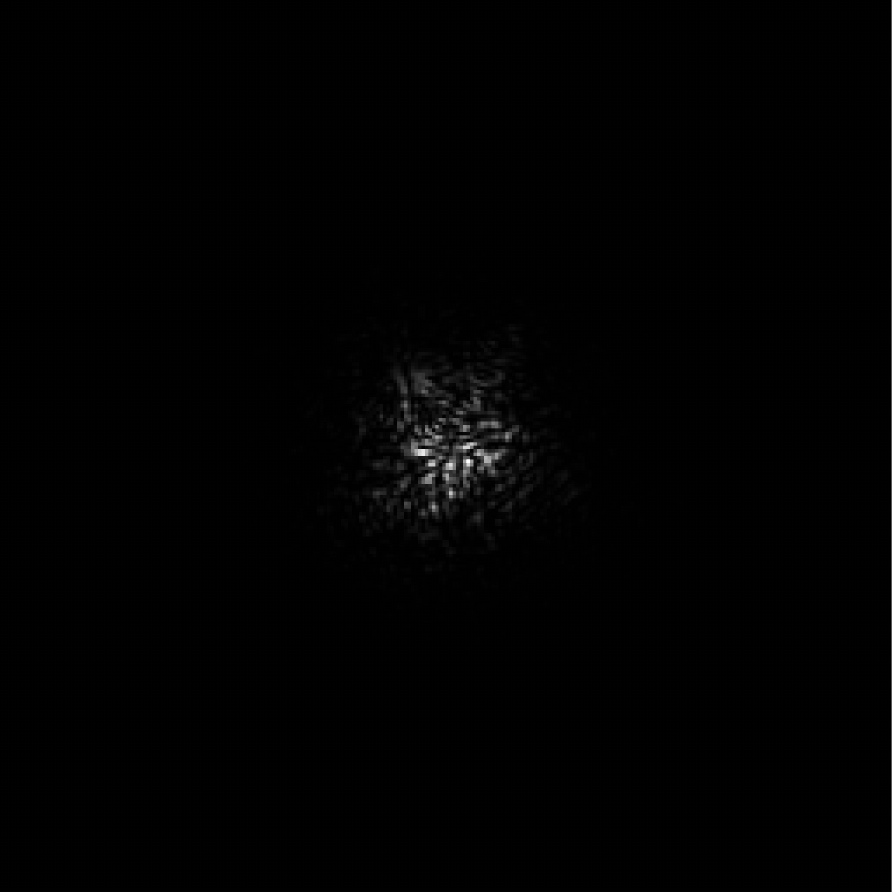}
		\caption{PSF}
	\end{subfigure}
	\begin{subfigure}[c]{\textwidth}
		\includegraphics[width=\textwidth]{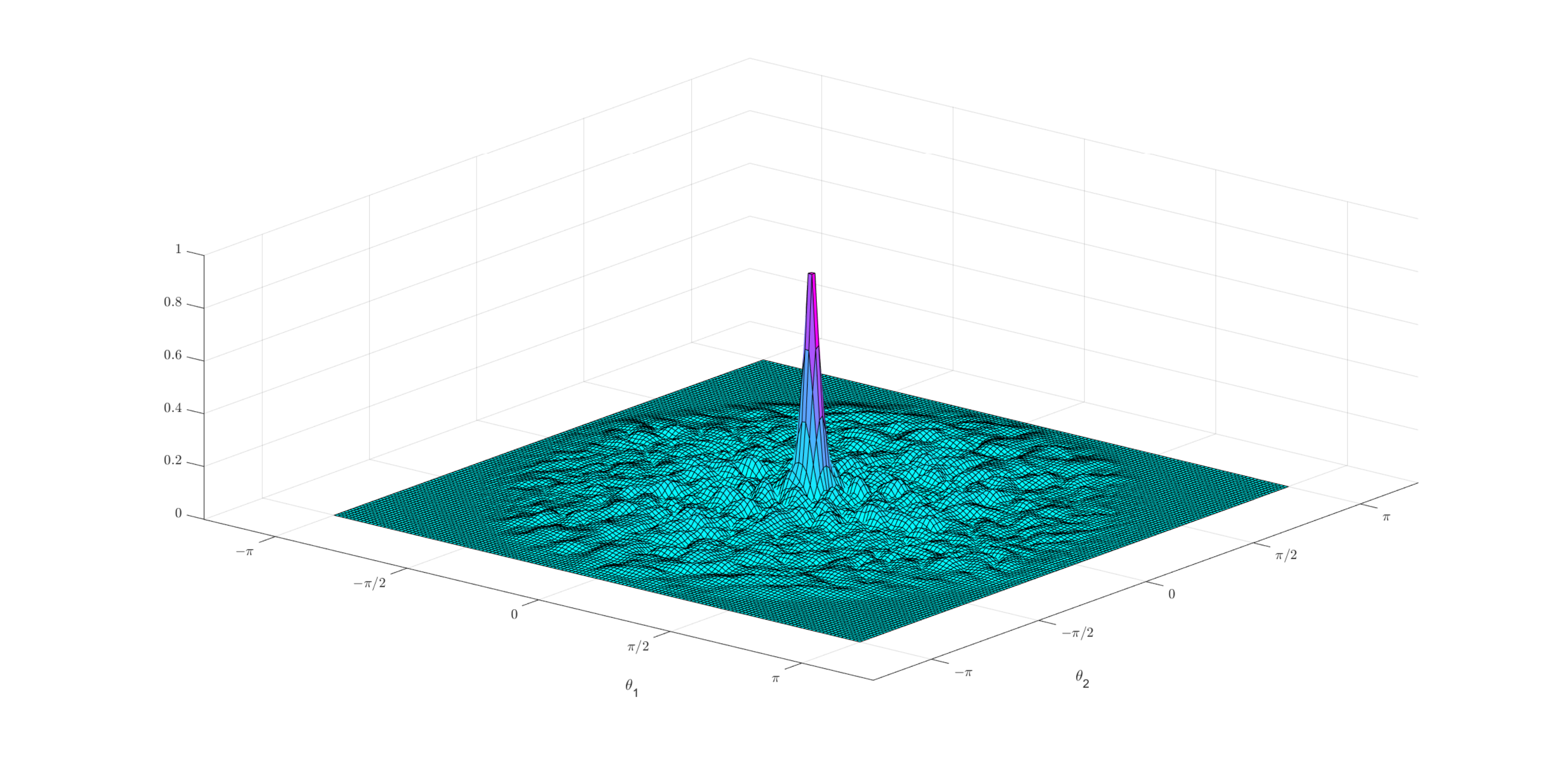}
		\caption{Symbol $|f|$}
	\end{subfigure}
	\caption{PSF of speckle blur and associated symbol.}
	\label{fig:symbol_plot}
	\end{center}
\end{figure}
		
\begin{theorem}\label{thm:preconditioner_cluster}
		Let $\varepsilon$ and $\tau$ be positive values such that $\varepsilon\in(0,1)$ and $\tau\in (0,\pi)$. Let $f\in L^1([-\pi,\pi]^2)$  be a bivariate function with real Fourier coefficients,  periodically extended to the whole real plane, and such that
		\begin{equation}\label{eq:condition_on_f}
			\begin{cases}
				|f(\vartheta_1,\vartheta_2)| > {\varepsilon}, &  \mbox{if }\left|\vartheta_1^2+\vartheta_2^2\right|<\tau,\\
				|f(\vartheta_1,\vartheta_2)| \le \varepsilon, 		& \mbox{otherwise}
			\end{cases}.
		\end{equation}
		and define 
		\begin{equation}
			g_\tau(\vartheta_1,\vartheta_2) = 
			\begin{cases}
				|{f(\vartheta_1,\vartheta_2)}|, &  \mbox{if }\left|\vartheta_1^2+\vartheta_2^2\right|<\tau,\\
				1, 		& \mbox{otherwise}
			\end{cases}.
		\end{equation}
Then
		\begin{equation}
			\left\{\cir_n(g_\tau)^{-1}Y\toep_n(f)\right\}_n \sim_\lambda  \psi
		\end{equation}
where
		\begin{equation}\label{eqn:prec_distribution}
			\psi(\vartheta_1,\vartheta_2) = 
			\begin{cases}
				\frac{1}{|f|}\begin{bmatrix} 0 & f(\vartheta_1,\vartheta_2) \\ \\ \overline{f(\vartheta_1,\vartheta_2)} & 0 \end{bmatrix}, &  \mbox{if }\left|\vartheta_1^2+\vartheta_2^2\right|<\tau,\\ \\
				\begin{bmatrix} 0 & f(\vartheta_1,\vartheta_2) \\ \\ \overline{f(\vartheta_1,\vartheta_2)} & 0 \end{bmatrix}, & \mbox{otherwise}
			\end{cases}.
		\end{equation}
	\end{theorem}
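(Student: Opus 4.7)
The plan is to derive the claim using the multilevel block Generalized Locally Toeplitz (GLT) calculus. The base ingredient is the symmetrization result from \cite{FFHMS}, which in the bilevel setting gives $\{Y\toep_n(f)\}_n\sim_\lambda \phi$ with
$$\phi(\vartheta_1,\vartheta_2) = \begin{bmatrix} 0 & f(\vartheta_1,\vartheta_2) \\ \overline{f(\vartheta_1,\vartheta_2)} & 0 \end{bmatrix}.$$
The $2\times 2$ matrix-valued symbol is forced by two structural facts: the matrix $Y\toep_n(f)$ is real symmetric (since the assumption that $f$ has real Fourier coefficients makes $\toep_n(f)$ persymmetric), and the anti-identity permutation pairs the index $i$ with $n^2+1-i$, which is naturally captured by a block reorganization of size two placing $f$ and $\overline{f}$ on the anti-diagonal.

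Next I would show that the BCCB preconditioner fits into the same framework. By \eqref{eq:cij} the spectrum of $\cir_n(g_\tau)$ is an exact equispaced sampling of $g_\tau$, so as a scalar sequence $\{\cir_n(g_\tau)\}_n\sim_\lambda g_\tau$, and it embeds as a block GLT sequence with symbol $g_\tau I_2$. The hypothesis \eqref{eq:condition_on_f}, together with $g_\tau=1$ in the noise region, guarantees $g_\tau\ge\min(\varepsilon,1)>0$ almost everywhere, so $g_\tau^{-1}\in L^\infty$ and $\{\cir_n(g_\tau)^{-1}\}_n$ is block GLT with symbol $g_\tau^{-1} I_2$.

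I would then combine these using the product axiom of the GLT $*$-algebra to obtain that $\{\cir_n(g_\tau)^{-1}Y\toep_n(f)\}_n$ is block GLT with symbol $g_\tau^{-1}\phi=\psi$. The passage to the eigenvalue distribution $\sim_\lambda \psi$ is then obtained by noting that $\cir_n(g_\tau)^{-1}Y\toep_n(f)$ is similar, through $\cir_n(g_\tau)^{1/2}$, to the Hermitian matrix $\cir_n(g_\tau)^{-1/2}Y\toep_n(f)\cir_n(g_\tau)^{-1/2}$, for which the GLT symbol governs the eigenvalue distribution. The main obstacle I anticipate is technical rather than conceptual: verifying that the discontinuity of $g_\tau$ across the curve $\vartheta_1^2+\vartheta_2^2=\tau$ does not break the hypotheses of the $L^\infty$ product and inversion theorems, and that the sampled values of $g_\tau$ entering $\cir_n(g_\tau)$ remain uniformly bounded away from zero so that the inversion step is justified for every finite $n$ of interest.
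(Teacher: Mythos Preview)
Your high-level strategy is sound and close to the paper's, but there is a real gap in the way you bridge spectral distribution and GLT membership. You start from $\{Y\toep_n(f)\}_n\sim_\lambda\phi$ (a spectral distribution, cited from \cite{FFHMS}) and then invoke the GLT product axiom to multiply by $\{\cir_n(g_\tau)^{-1}\}_n$. The product axiom, however, requires $\sim_{\textsc{glt}}$, not merely $\sim_\lambda$, for both factors. The sequence $\{Y\toep_n(f)\}_n$ is \emph{not} a block GLT sequence with symbol $\phi$ in the raw form you write it: $Y\toep_n(f)$ is Hankel-structured, and Hankel sequences are zero-distributed in the standard GLT framework. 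What makes the $2\times 2$ symbol $\phi$ appear is a specific permutation similarity $\Pi_nU_n(\cdot)U_n\Pi_n^T$ (introduced in \cite{MR4304085}) that reorganizes the $n^2\times n^2$ matrix into a genuine block structure; only after this reorganization does one obtain a block GLT sequence with symbol $\phi$. Your phrase ``embeds as a block GLT sequence with symbol $g_\tau I_2$'' hides exactly the same step for the preconditioner, and it is not automatic: one must check that $\{\Pi_nU_n\,\cir_n(g_\tau)\,U_n\Pi_n^T\}_n\sim_{\textsc{glt}} g_\tau I_2$.

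The paper's proof avoids rebuilding this machinery by directly invoking \cite[Theorem~3.3]{MR4304085}, which already packages the permutation argument and reduces the claim to two verifiable hypotheses on the preconditioner sequence: $\{\cir_n(g_\tau)\}_n\sim_{\textsc{glt}} g_\tau$ and $\{\Pi_nU_n\,\cir_n(g_\tau)\,U_n\Pi_n^T\}_n\sim_{\textsc{glt}} g_\tau I_2$. These are then checked by comparing $\cir_n(g_\tau)$ with $\toep_n(g_\tau)$ (the difference is GLT-zero) and using \cite[Remark~4]{MR4304085} for the permuted Toeplitz sequence. Your similarity trick with $\cir_n(g_\tau)^{\pm 1/2}$ to pass from GLT symbol to eigenvalue distribution is correct and is essentially what lies inside that theorem, but it only helps once you have established block GLT membership for the product, which is precisely the missing piece. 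A secondary point: as the paper notes, $\cir_n(g_\tau)$ is a slight abuse of notation here (since $g_\tau$ is not a trigonometric polynomial), and the actual construction is to threshold the eigenvalues of $\cir_n(f)$; this is what makes $\{\cir_n(g_\tau)-\toep_n(g_\tau)\}_n\sim_{\textsc{glt}}0$ hold.
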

	\begin{proof}
		See Appendix \ref{appendixGLT}.
	\end{proof}
\begin{remark}
		On the set where $|f|<\varepsilon$, we have $\psi=\begin{bmatrix} 0 & f \\ \overline{f} & 0 \end{bmatrix}$, which has eigenvalue functions $\pm |f|$ with image contained in $(-\varepsilon,\varepsilon)$. If $|f|> {\varepsilon}$, we have $\psi = \frac{1}{|f|}\begin{bmatrix} 0 & f \\ \overline{f} & 0 \end{bmatrix}$ with eigenvalue functions $\pm 1$. Hence, according to Remarks 2.12--2-13 in \cite{mbg}, spectral distribution~(\ref{eqn:prec_distribution}) suggests that there are at most $o(n^2)$ eigenvalues of $\cir_n(g_\tau)^{-1}Y\toep_n(f)$ outside three clusters at $\lambda=1$, $\lambda=-1$, and $\lambda\in[-\varepsilon,\varepsilon]$, whose cardinality depends on $\varepsilon$ and $\tau$, provided that $n$ is large enough. Note that $\varepsilon$ and $\tau$ are not independent: for each $\varepsilon$ you find a $\tau$ such that \eqref{eq:condition_on_f} holds. If $\varepsilon$ tends to 0, $\tau$ tends to $\pi$, that is, the set where $|f|<\varepsilon$ has the empty set as a limit, making the eigenvalues of the preconditioned matrix clustered at $\pm 1$. On the other hand, if $\varepsilon$ is chosen greater than the maximum of $|f|$, the preconditioner has no effect at all. So, it is important to choose $\varepsilon$, and consequently $\tau$, accurately, so that the preconditioner has a signifincant clustering effect without amplifying the noise.
\end{remark}

\begin{remark}	
	As we have already stated, a common choice for the preconditioning of $A$ in the image deblurring context is $\cir_n(p_\alpha)$, where $p_\alpha$ is defined in \eqref{eq:ftik}. In \cite{doi:10.1137/140974213,FFHMS,MR4304085} it is shown that under proper assumptions, if $\cir_n(p_\alpha)$ preconditions $A$, the absolute value of $\cir_n(p_\alpha)$ preconditions $YA$. For this reason, we consider the circulant matrix $\cir_n(|p_\alpha|)$ and for brevity we define $\tilde{g}_\alpha=|p_\alpha|$:
	\[
		\tilde{g}_\alpha(\vartheta_1,\vartheta_2)=\frac{|f(\vartheta_1,\vartheta_2)|}{|f(\vartheta_1,\vartheta_2)|^2+\alpha}.
	\]
	In general, the function $g_\tau$ is discontinuous and this guarantees a sharp subdivision between eigenvalue clusters. For $\tilde{g}_\alpha$ this is not true since it is a smooth low-pass filter. This implies that $\cir_n(\tilde{g}_\alpha)$ is less sensitive than $\cir_n(g_\tau)$ to the choice of the threshold parameter $\alpha$ and $\tau$, respectively, which is related to $\varepsilon$.
 
 To prove that $\cir_n(\tilde{g}_\alpha)$ is a regularizing preconditioner, suppose for simplicity that we take $\alpha = \varepsilon$ and study $\tilde{g}_\varepsilon$.
	With the considerations that we make in Appendix \ref{appendixGLT}, we can state that
	\[
		\left\{\cir_n(\tilde{g}_\alpha) Y\toep_n(f)\right\}_n \sim_\lambda  \tilde{g}_\alpha \begin{bmatrix} 0 & f \\ \overline{f} & 0 \end{bmatrix},
	\]
	which has eigenvalue functions
	\[
		\frac{|f|}{|f|^2+\varepsilon}(\pm |f|) = \pm \frac{|f|^2}{|f|^2+\varepsilon}.
	\]
	Note that $\frac{|f|^2}{|f|^2+\varepsilon}<1$, and when $|f|$ is much greater than $\varepsilon$ the eigenvalues are close to 1, which means a speed up of the convergence in the signal subspace.
	
	On the other hand, if $|f|\le \varepsilon$, we have
	\[
		\frac{|f|^2}{|f|^2+\varepsilon}\le \frac{\varepsilon^2}{|f|^2+\varepsilon}\le \frac{\varepsilon^2}{\varepsilon} = \varepsilon,
	\]
	which means that the small eigenvalues are not amplified by the preconditioner.
	
	In the numerical examples we will use $\tilde{g}_\alpha$ instead of ${g}_\tau$ since it showed an overall greater robustness and better performance.
\end{remark}
	
\section{Iterative regularization methods}\label{sec:itReg}

The purpose of the present section is to briefly introduce the iterative methods that we will use as regularization methods in the numerical experiments section. As mentioned in Section \ref{sec:intro}, iterative methods used to reconstruct blurred and noisy images exploit \emph{semiconvergence}, i.e., they firstly reduce the error and then diverge from the exact solution. When possible, we will use the discrepancy principle as stopping criterion for the iterations, that is, we stop the iterations when the norm of the residual vector is less than a tolerance times the noise level, i.e., the norm of the unknown noise vector affecting the right-hand-side vector $\bb$ in \eqref{original}. 

As we anticipated, in the case where $A$ is a BTTB matrix, we want to study the behaviour of MINRES applied to the linear system \eqref{eq:Ysys}, which has a symmetric coefficient matrix. The regularization properties of the MINRES method were proven in \cite{KilmerMINRES, MR2312499}. When using MINRES, a preconditioning strategy needs to be applied symmetrically (i.e., both on the left and on the right), that is, the preconditioned symmetrized linear system becomes
\[
	P^{1/2}YAP^{1/2}\bz=P^{1/2}Y\bb, \; \bx=P^{1/2}\bz.
\]
While when applying right preconditioning only the residual of the preconditioned system is the same of that of the non-preconditioned one and the discrepancy principle can be applied without a significant additional computational cost, left preconditioning alters the residual. Therefore, in the symmetric preconditioning case, we will comment on the best reconstruction achieved by the considered methods and their stability. Other stopping criteria can be chosen, but the study of their behaviour is beyond the purpose of the present paper.

While theoretical results guarantee a regularizing behaviour of MINRES, the success of GMRES as regularization method is problem dependent. Although some theory has been developed \cite{calvetti}, it often happens that the GMRES is not effective when the coefficient matrix $A$ is not close to normal \cite{MR2312499}. In the case where we consider reflective boundary conditions, if the PSF is not quadrantally symmetric, the matrix $A$ is neither symmetric nor persymmetric. However, $YA$ is close to being symmetric even if $A$ is highly non-symmetric. In this case, we expect that GMRES applied to the system \eqref{eq:Ysys} over-performs GMRES applied to the original system.

The LSQR method requires $A^T$, and replacing it with $A'$ is easy to implement but is not theoretically sound, as explained in the introduction. 
We stress that one iteration of LSQR costs a matrix-vector product with matrix $A^T$ more than one iteration of MINRES and GMRES, so this needs to be taken into account when analysing the convergence speed in terms of iteration number.

In order to speed up the convergence of the iterative methods listed above, we apply the preconditioning strategy analysed in Section \ref{sec:reg_prec}. More precisely, we use $\cir_n(p_\alpha)$ as a (right) preconditioner for LSQR type methods, which we apply to systems with $A$, and $\cir_n(|p_\alpha|)$ for GMRES type methods, which we apply to systems with $YA$, on the right. In particular, when we use the circulant strategy in combination with FLSQR and FGMRES, the preconditioner becomes the iteration-dependent matrix $\cir_n(p_{\alpha_k})$, where we choose the parameter as the geometric sequence $\alpha_k=\alpha_0 q^k$, where $k$ is the iteration counter, while $\alpha_0 = 0.1$ and $q=0.8$ are chosen following the recommendations in \cite{martin}. In the rest of the paper, when we denote a preconditioner by $P$, we mean a circulant preconditioner, whose exact formulation will be clear from the context. 

For enforcing sparsity in the computed solution, we apply the iteration-dependent preconditioner studied in \cite{silvia14, MR4331956}, which here we simply denote by $W$. More specifically, at the $k$th iteration of the considered methods we have 
\[
W = \text{diag}(|\bx_{k-1}|^{1/2})\,,
\]
where $\bx_{k-1}$ is the solution computed at the previous (i.e., the $(k-1)$th) iteration, and where absolute value and exponentiation are applied component-wise. 
Such preconditioner stems from the iteratively reweighted least square method applied to the problem
\[
\min_{\bx}\|A\bx-\bb\|_2^2 + \lambda \|\bx\|_1\,,
\]
whereby, assuming that the matrix $W$ is invertible\footnote{The matrix $W^{-1}$ is required for the formal change of variable, but in practice the application of the preconditioner requires only a matrix-vector product with the matrix $W$. Therefore, we do not implement any shifting strategy to guarantee the invertibility of $W$ when $\bx_{k-1}$ has null entries.}, the regularization term $\lambda \|\bx\|_1$ is first approximated by the squared 2-norm term $\lambda \|W^{-1}\bx\|_2^2$ and then undergoes a transformation to standard form (change of variable), leading to the problem
\[
\min_{\by}\|AW\by-\bb\|_2^2 + \lambda \|\by\|_2^2\,,\quad \by=W^{-1}\bx\,.
\]
Following common practice, we apply either FGMRES or FLSQR to the above problem, after dropping the regularization term. Such methods allow updates of $W$ as the iterations proceed. The iteration-dependent matrix $W$ modifies the approximation subspace for the solution in such a way that sparsity is naturally enforced within its basis vectors. We refer to \cite{newsurvey, MR4331956} for more details about these approaches; a numerical illustration is also provided in Section \ref{ssec:edges}. 

\section{Numerical Examples}\label{sec:numExp}
In this section we present four examples. The Satellite test problem in Subsection \ref{ssec:satellite} and the Phantom test problem in \ref{ssec:phantom} are aimed at discussing the efficiency of the symmetrization strategy in the case of zero boundary conditions, namely, when the matrix $A$ is BTTB. The Cameraman test problem in Subsection \ref{ssec:cameraman} is an example with reflective boundary conditions, which shows the performance of the `symmetrization' strategy on matrices that are close to being symmetric. Subsection \ref{ssec:edges} is devoted to the numerical study of the reconstruction of a sparse image, showing the effect of the `symmetrization' strategy and preconditioning within flexible Krylov subspace methods. 

In all examples, the blurred and noisy image is given by
 \[\bb=A\bx+\frac{\bxi}{\|\bxi\|}\sigma\|A\bx\|,\] 
	where $\bx$ is the exact image, $\bxi$ 
 is a random Gaussian white noise vector and $\sigma$ is the noise level.
When available, we use the blurring functions and the implementation of the iterative methods included in \IRTools\ \cite{ir}.	
	
\subsection{Test Problem: Satellite with zero boundary conditions}\label{ssec:satellite}
	The satellite image is blurred with a medium level speckle blur; a noise of level $\sigma=0.05$ is added. Figure~\ref{fig:sat_images} shows the exact image, the PSF, and the blurred image, all of size $256\times 256$ pixels.
		\begin{figure}
		\includegraphics[width=0.3\textwidth]{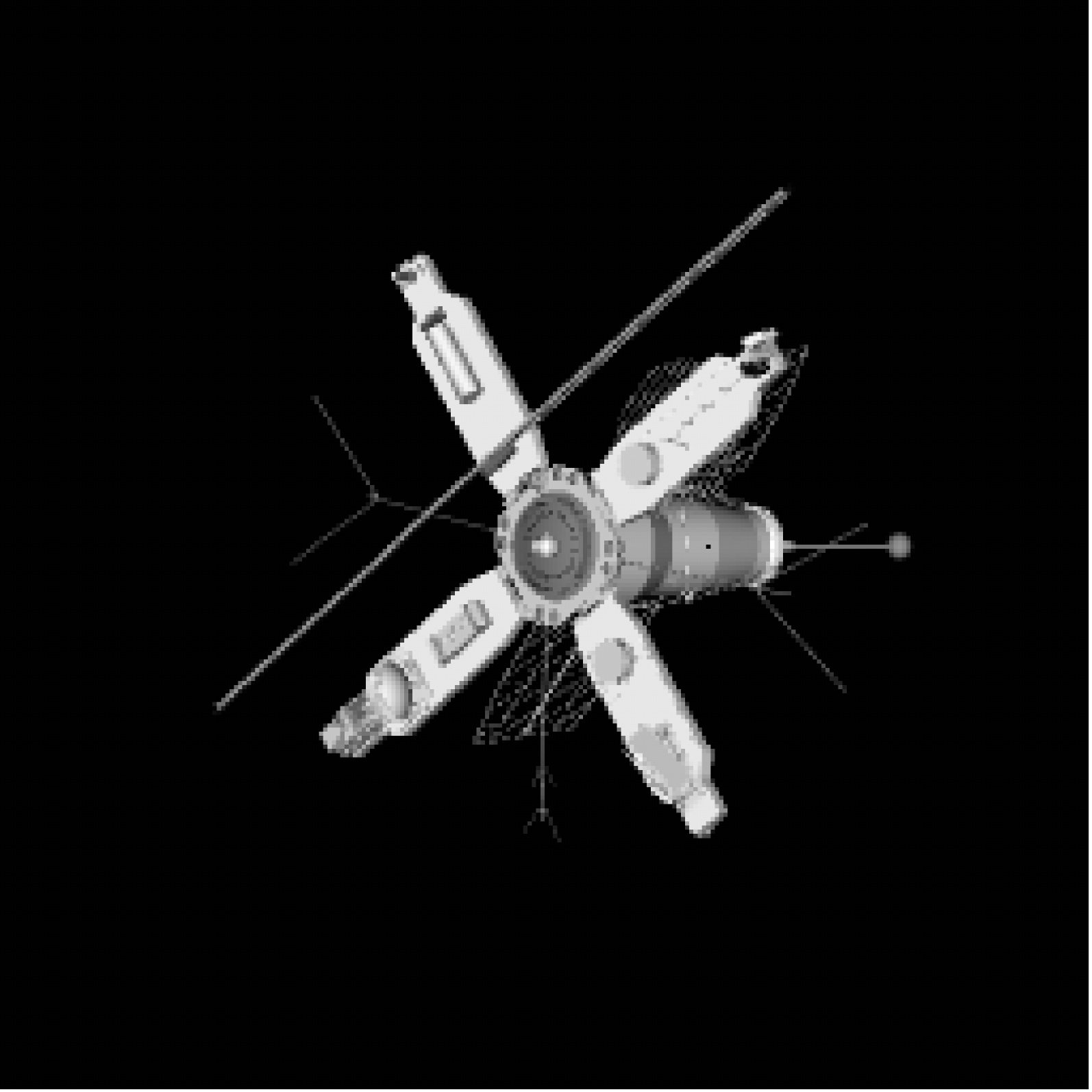}
		\includegraphics[width=0.3\textwidth]{Figures/sat_image_psf.pdf}
		\includegraphics[width=0.3\textwidth]{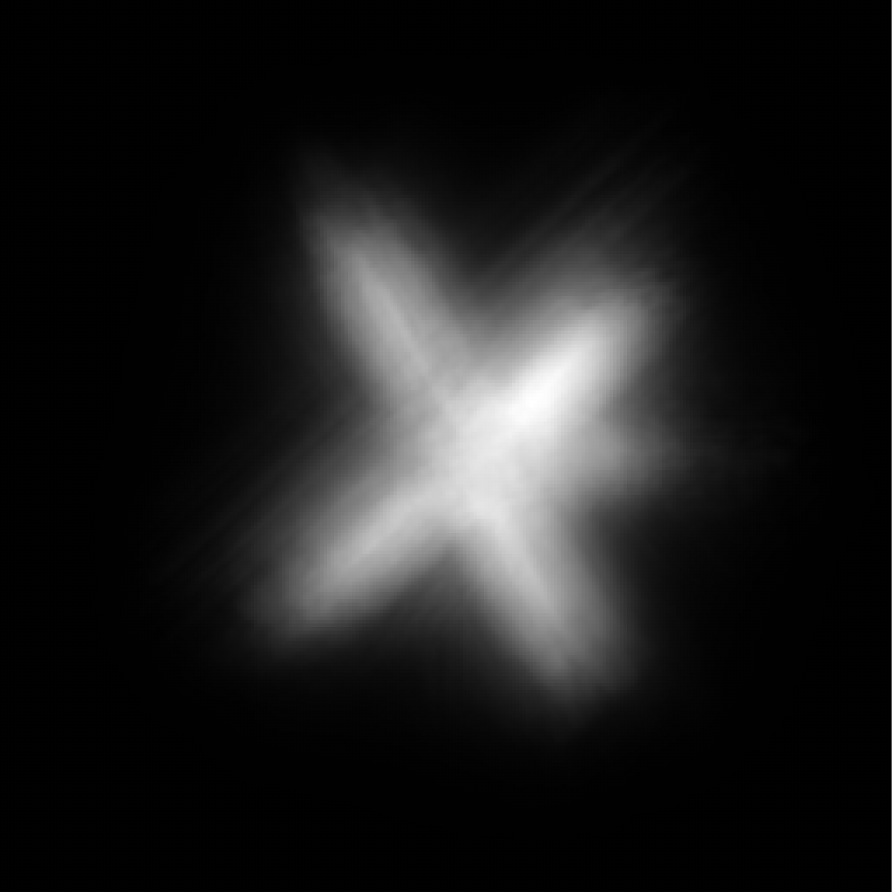}
		\caption{Satellite test problem. From left to right: exact image, PSF, and blurred image.}
		\label{fig:sat_images}
	\end{figure}
	In Figure~\ref{fig:errors_sat_prec}, we report the error behaviour of (preconditioned) MINRES applied to the symmetrized system \eqref{eq:Ysys} and of (preconditioned) LSQR and GMRES applied to the non-symmetrized linear system. The regularizing parameter $\alpha$ needed in the construction of the circulant preconditioners $\cir_n(p_\alpha)$ and $\cir_n(|p_\alpha|)$ with $p_\alpha$ defined in \eqref{eq:ftik} was manually tuned, i.e., we chose the $\alpha\in\{10^{-1}, 10^{-2}, 10^{-3}, 10^{-4}, 10^{-5}\}$ which numerically proved to be the best in terms of balancing the convergence speed and the quality of the reconstruction, which is $\alpha=10^{-2}$. MINRES is slower to reach the minimizer than LSQR in terms of iteration number, however one iteration of MINRES is computationally less costly, not requiring the multiplication by $A^T$. Moreover, we highlight that preconditioned MINRES is more stable than preconditioned LSQR: for the latter, the iteration should be stopped between 5 and 12 to achieve a result close to the best possibile, while for preconditioned MINRES the range of possible stopping iteration numbers for achieving an analogous result is between 10 and 22, which is a wider range. GMRES applied to the non-symmetrized linear system does not provide a reconstruction of high quality when compared to the other methods. In Figure \ref{fig:sat_images_reconstructed} we compare the best reconstruction by preconditioned MINRES and LSQR for this difficult deblurring problem.
	\begin{figure}
		\includegraphics[width=0.9\textwidth]{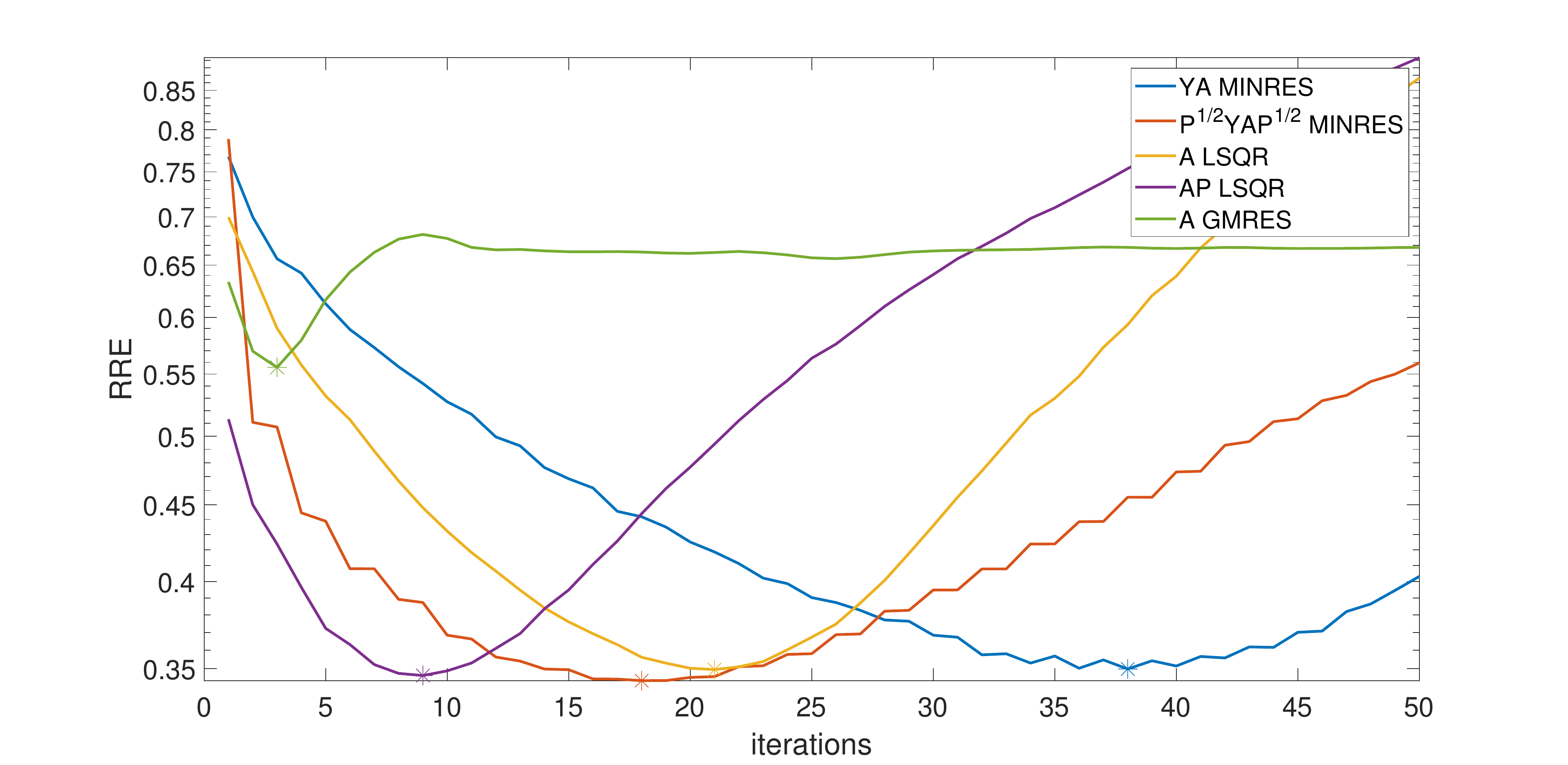}
		\caption{{Satellite} test problem. Comparison between the error behaviours of (preconditioned) MINRES applied to the symmetrized system \eqref{eq:Ysys} and of (preconditioned) LSQR and GMRES applied to the non-symmetrized linear system. The asterisks mark the iterations giving the best reconstruction in terms of RRE.}
		\label{fig:errors_sat_prec}
	\end{figure}
	\begin{figure}
		\includegraphics[width=0.3\textwidth]{Figures/sat_image_true.pdf}
		\includegraphics[width=0.3\textwidth]{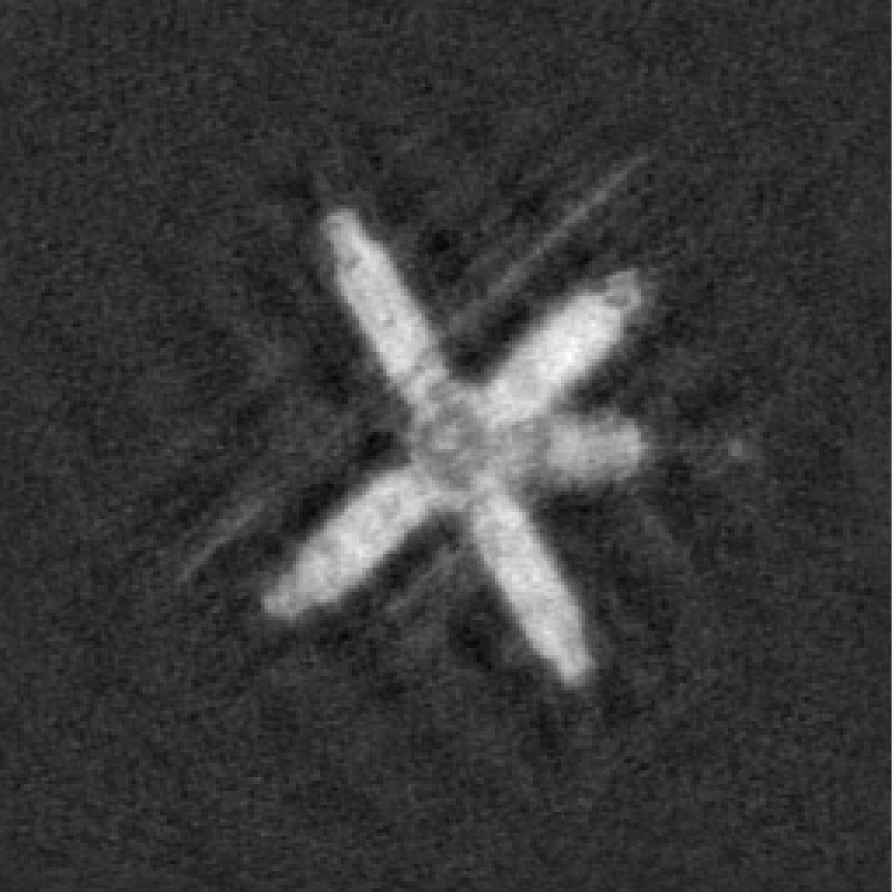}
		\includegraphics[width=0.3\textwidth]{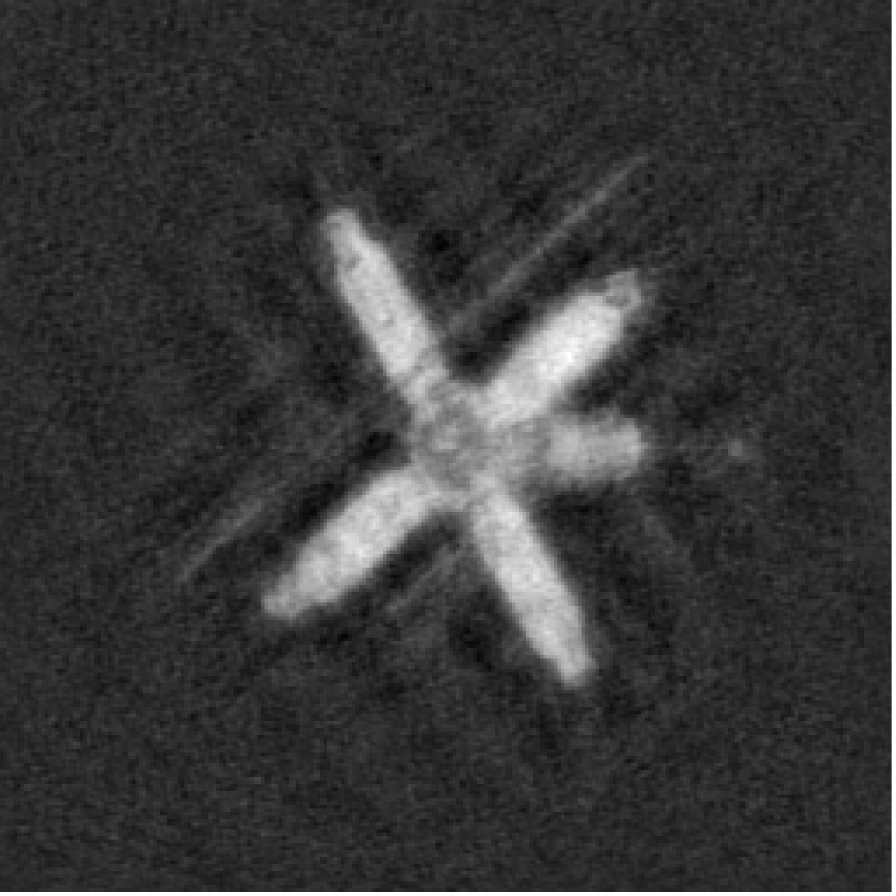}
		\caption{Satellite test problem. Exact image (left), best preconditioned MINRES reconstruction (center), best preconditioned LSQR reconstruction (right).}
		\label{fig:sat_images_reconstructed}
	\end{figure}
	
\subsection{Test Problem: Phantom with zero boundary conditions}\label{ssec:phantom}
	The Phantom test problem analyses the modified Shepp-Logan phantom blurred with a unidirectional motion. The PSF can be seen in Figure \ref{fig:phantom_images}, together with the exact image and the blurred image. A noise of level $\sigma=0.01$ is applied.
	\begin{figure}
		\includegraphics[width=0.3\textwidth]{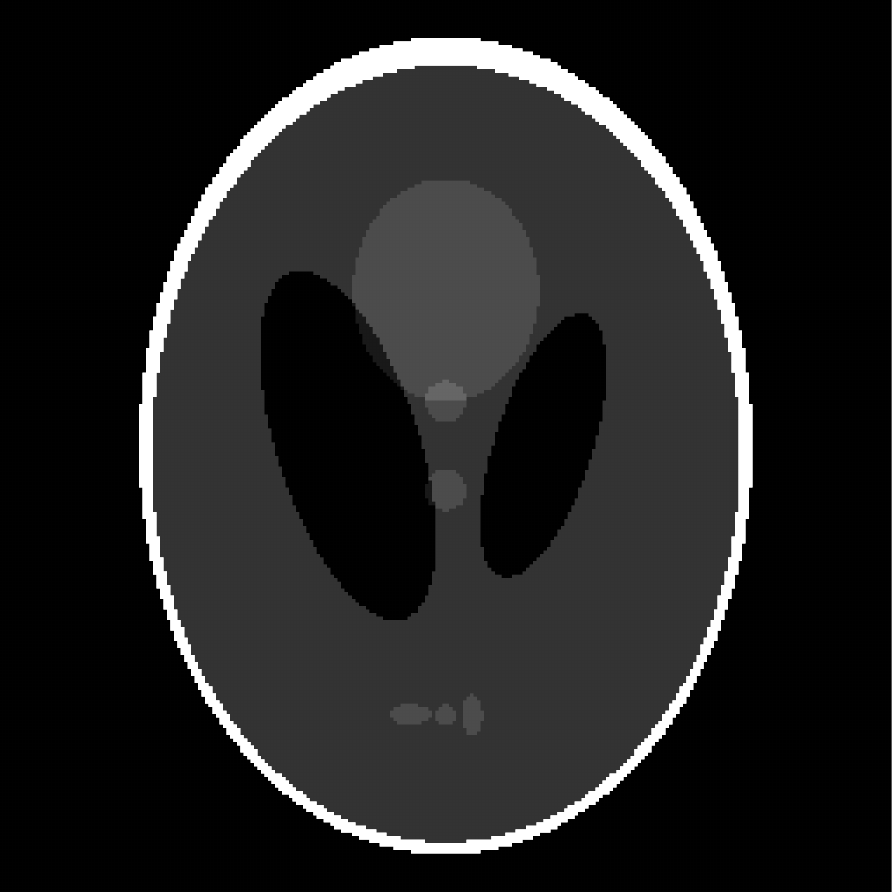}
		\includegraphics[width=0.3\textwidth]{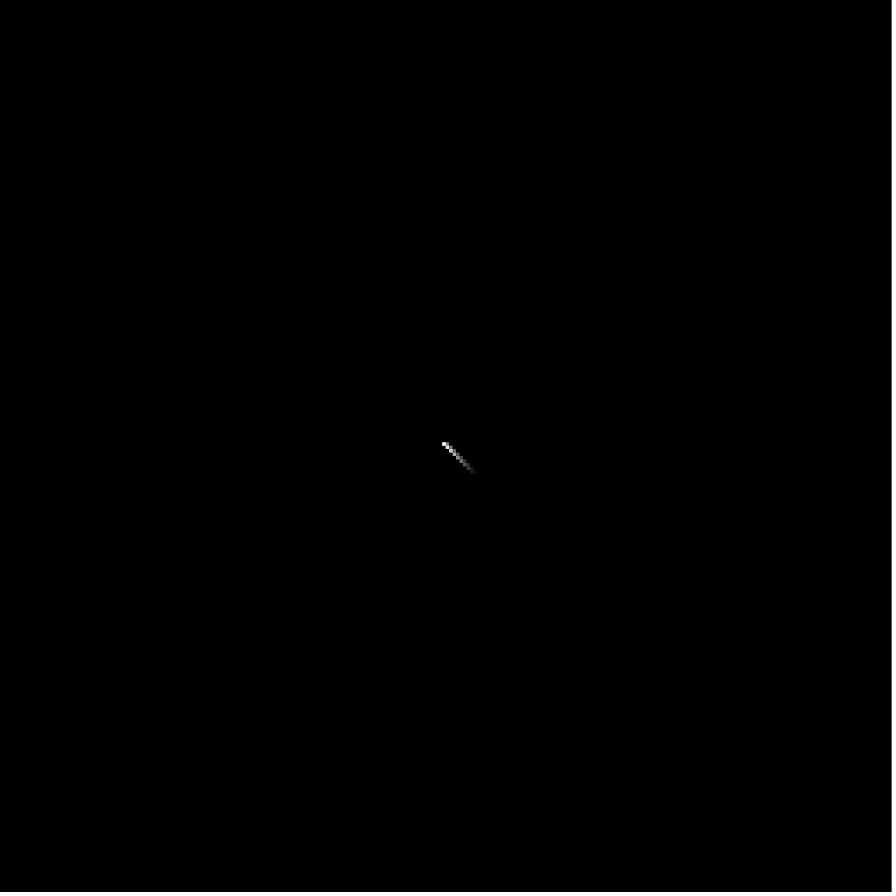}
		\includegraphics[width=0.3\textwidth]{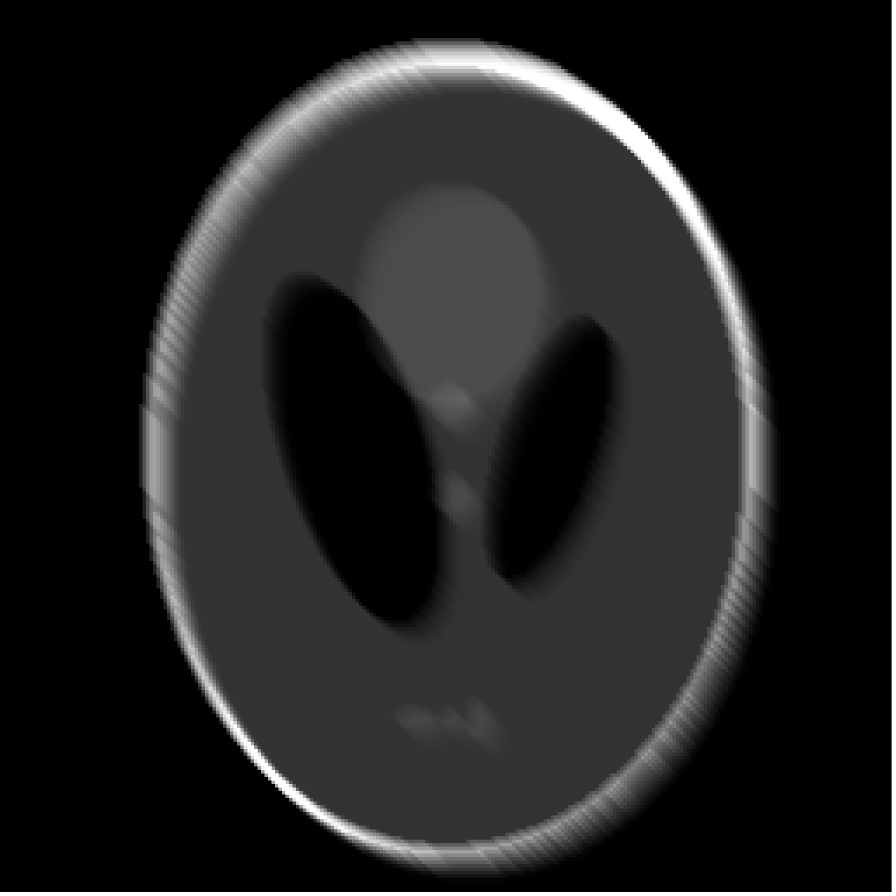}
		\caption{Phantom test problem. From left to right: exact image, PSF, and blurred image.}
		\label{fig:phantom_images}
	\end{figure}
	In Figure \ref{fig:errors_phant_prec}, we report the error behaviour of (preconditioned) MINRES applied to the symmetrized system \eqref{eq:Ysys} and of (preconditioned) LSQR and GMRES applied to the non-symmetrized linear system. Also in this case, the regularizing parameter $\alpha$ needed in the construction of the preconditioner $\cir_n(p_\alpha)$ was manually chosen equal to $10^{-2}$.
	This example confirms that preconditioned MINRES is more stable than preconditioned LSQR, since its semi-convergence is slower. We remark again that this does not translate into a higher computational cost of the overall method, because the cost of a single iteration needs to be taken into account. In the Phantom case, GMRES applied to the non-symmetrized linear system performs better than in the Satellite case, but it is still over-performed by the other methods. In Figure \ref{fig:phant_images_reconstructed} we compare the best reconstruction by preconditioned MINRES and LSQR.
	\begin{figure}
		\includegraphics[width=0.9\textwidth]{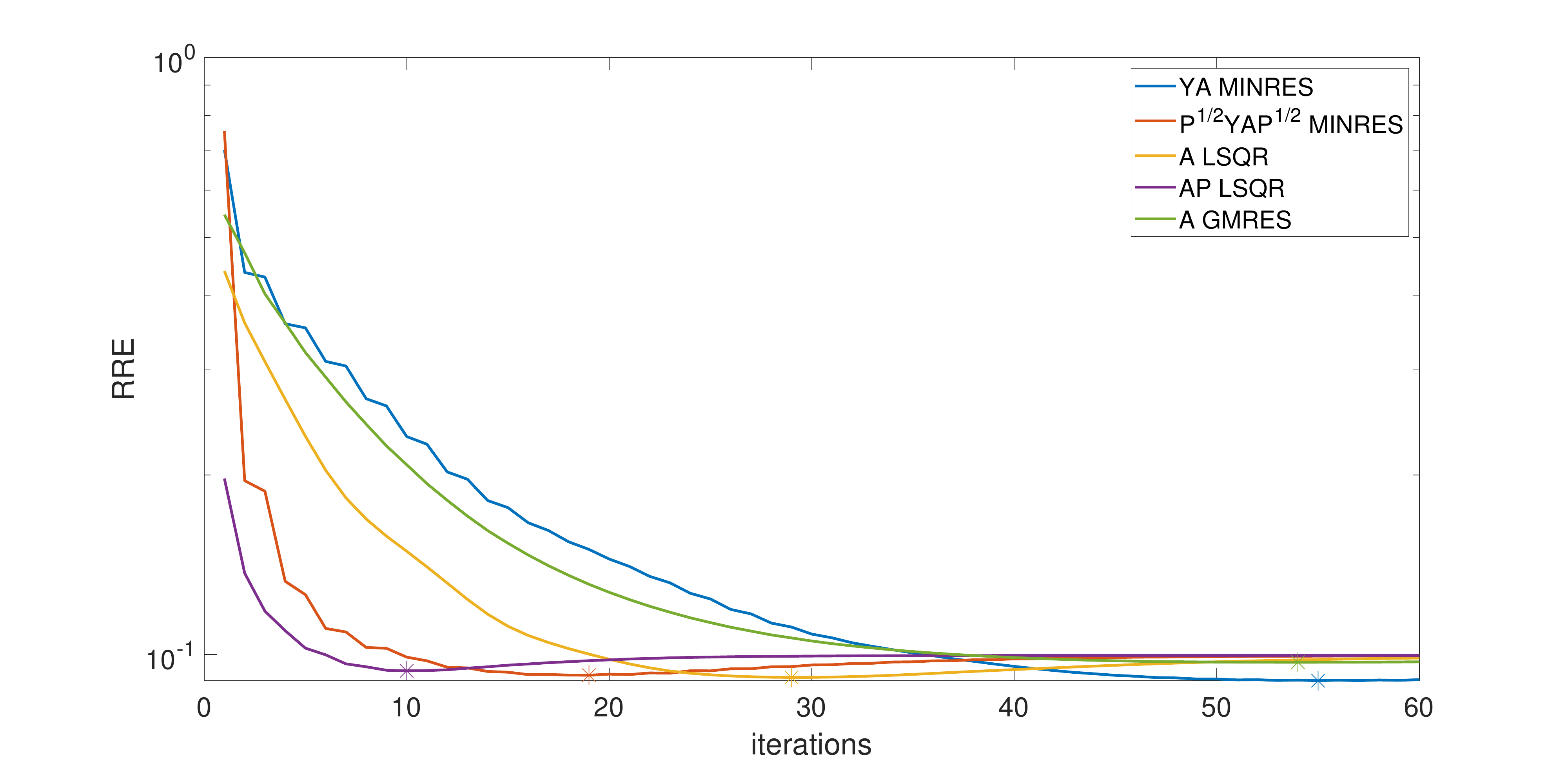}
		\caption{{Phantom} test problem. Comparison between the error behaviours of (preconditioned) MINRES applied to the symmetrized system \eqref{eq:Ysys} and of (preconditioned) LSQR and GMRES applied to the non-symmetrized linear system. The asterisks mark the iterations giving the best reconstruction in terms of RRE.}
		\label{fig:errors_phant_prec}
	\end{figure}
	\begin{figure}
		\includegraphics[width=0.3\textwidth]{Figures/phantom_image_true.pdf}
		\includegraphics[width=0.3\textwidth]{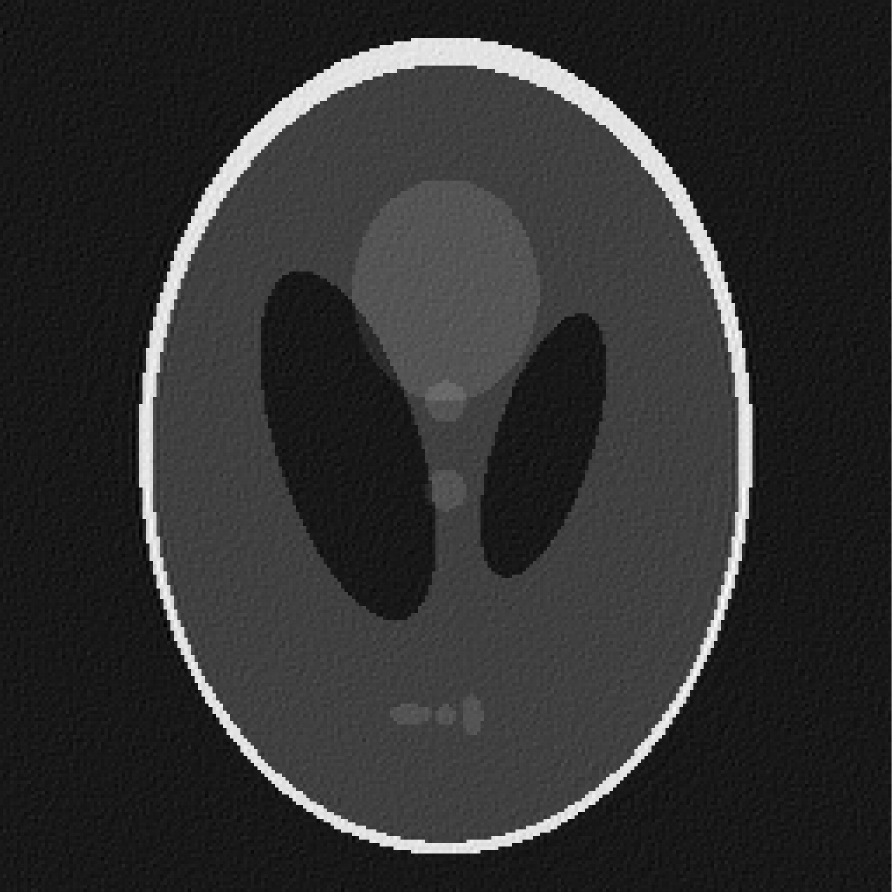}
		\includegraphics[width=0.3\textwidth]{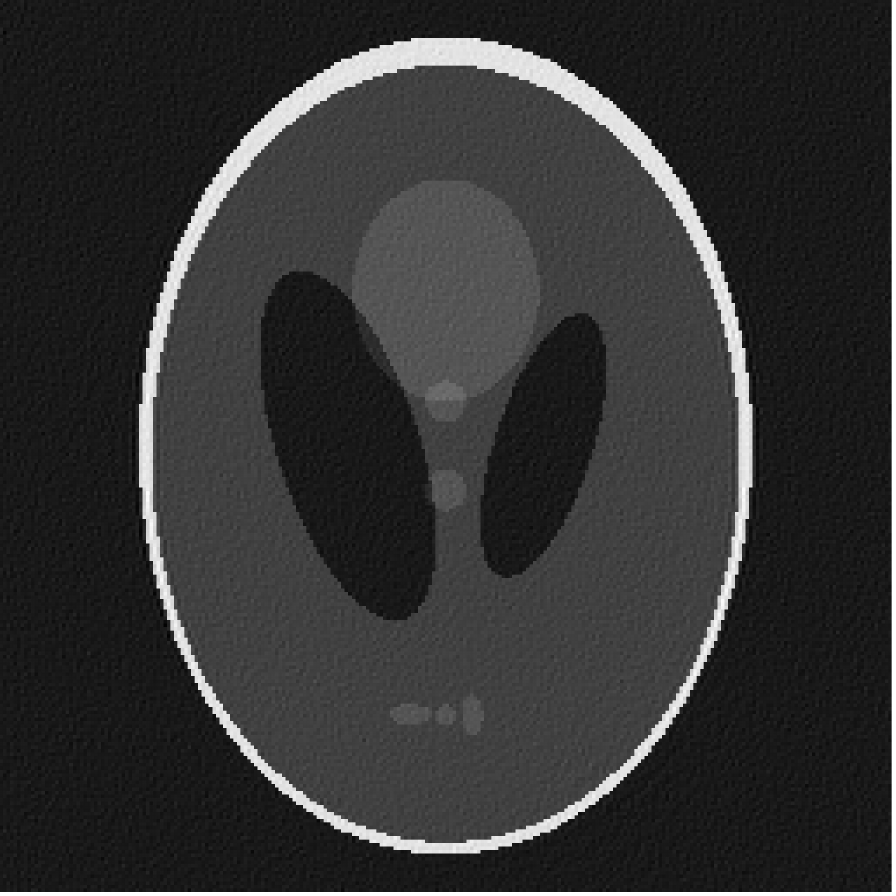}
		\caption{Phantom test problem. Exact image (left),  best preconditioned MINRES reconstruction (center), best preconditioned LSQR reconstruction (right).}
		\label{fig:phant_images_reconstructed}
	\end{figure}
	
%

		\subsection{Test Problem: Cameraman with reflective boundary conditions}\label{ssec:cameraman}
	In this case, we analyse the reconstruction of the cameraman image contaminated by motion blur in two directions and by a noise of level $\sigma=0.01$. The PSF can be seen in Figure \ref{fig:camera_images}, together with the exact image and the blurred image.
		\begin{figure}
		\includegraphics[width=0.3\textwidth]{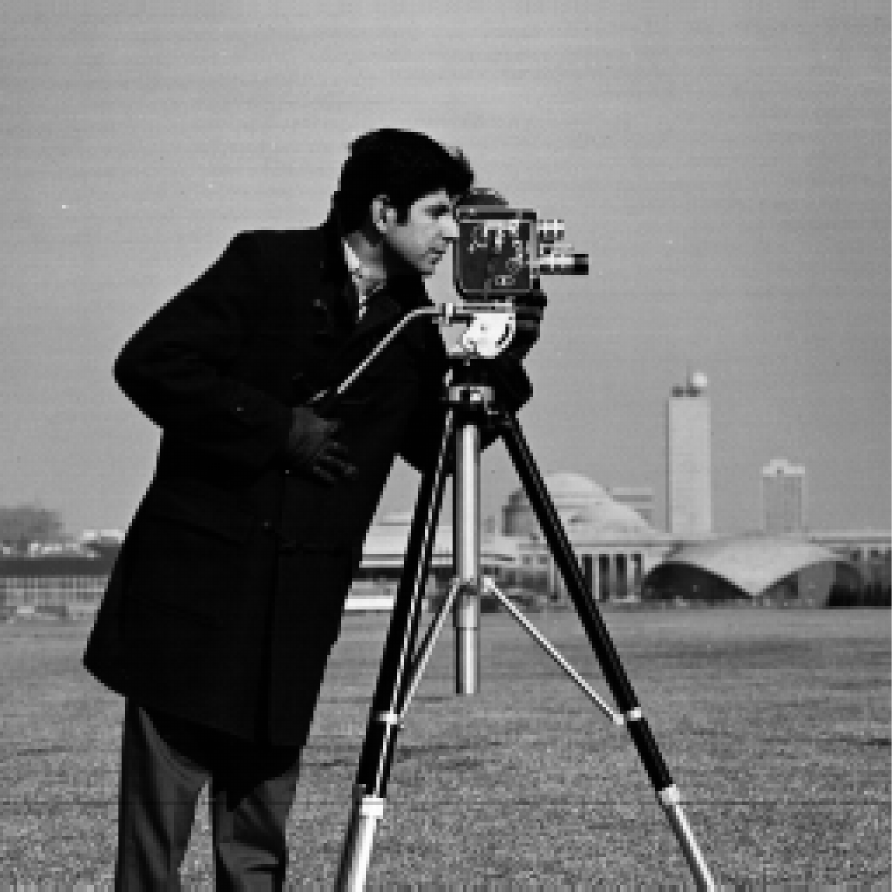}
		\includegraphics[width=0.3\textwidth]{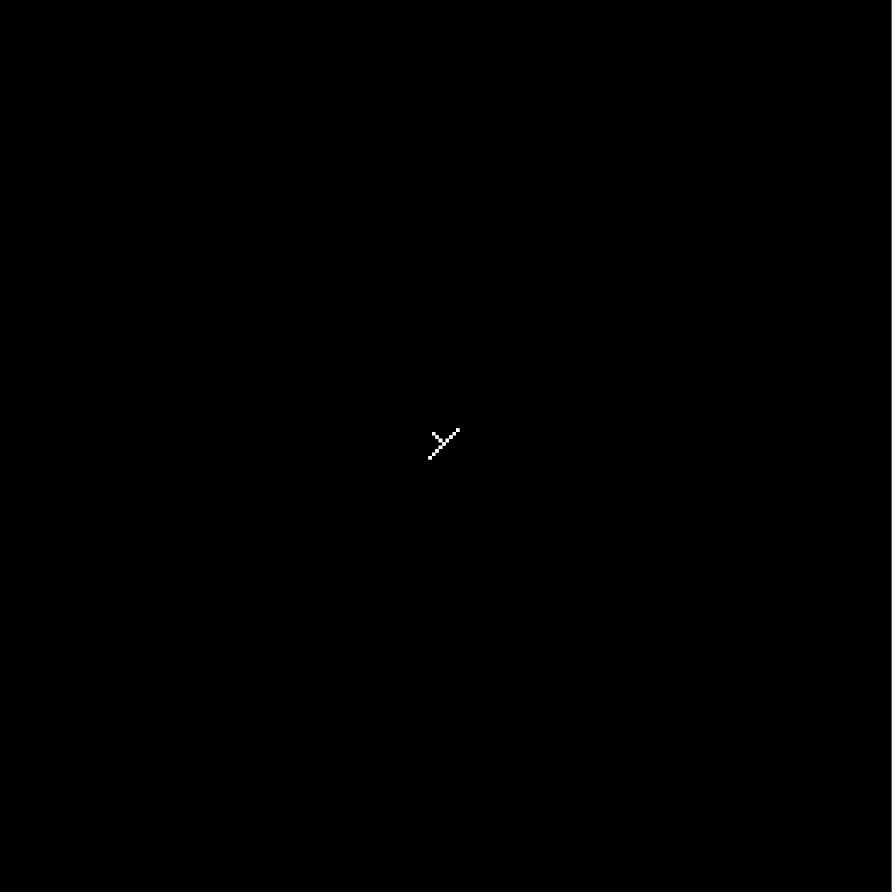}
		\includegraphics[width=0.3\textwidth]{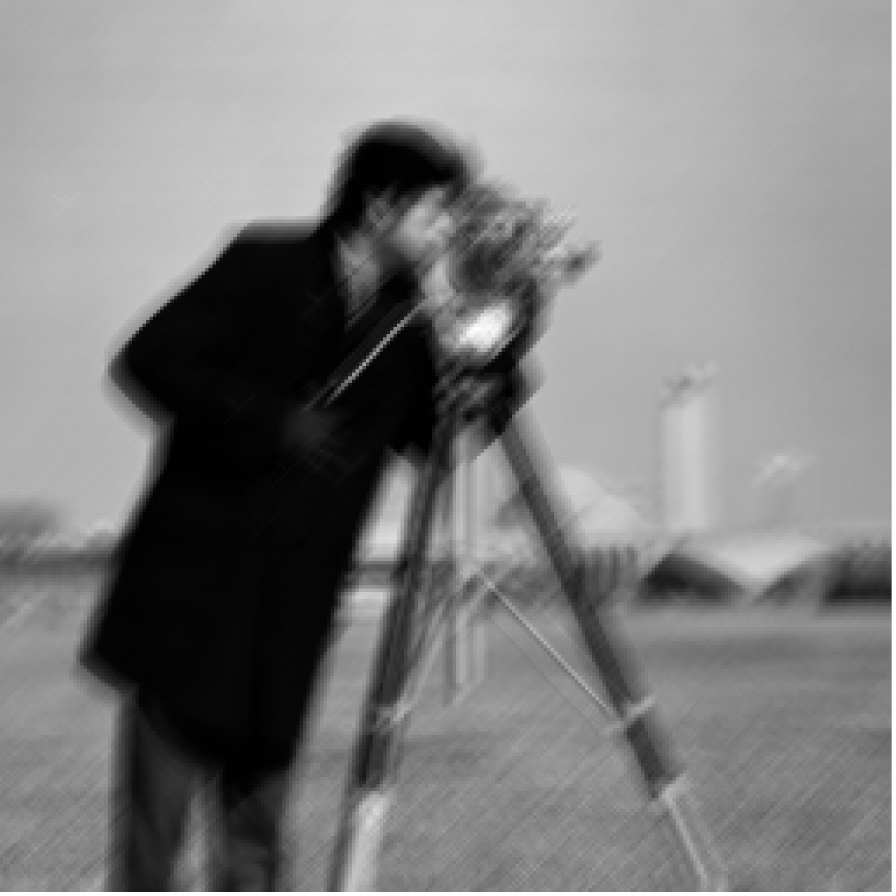}
		\caption{Cameraman test problem. From left to right: exact image, PSF, and blurred image.}
		\label{fig:camera_images}
	\end{figure}
	In Figure \ref{fig:errors_camera_prec}, we report the error behaviour of GMRES and FGMRES applied to the symmetrized system \eqref{eq:Ysys} and of LSQR, FLSQR, and GMRES applied to the non-symmetrized linear system. The iteration-dependent preconditioners for FGMRES and FLSQR are, respectively, $\cir_n(|p_{\alpha_k}|)$ and $\cir_n(p_{\alpha_k})$, with $\alpha_k=0.1\cdot(0.8)^k$. The dots in Figure \ref{fig:errors_camera_prec} show the iteration for which the discrepancy principle is fulfilled. Table \ref{tab:times_psnr_camera} shows the RRE and PNSR values with the corresponding iteration numbers and computational times for the restoration with minimum RRE and for the restoration determined when terminating the iterations with the discrepancy principle. The error behaviour is in accordance with the computational times, since one iteration of LSQR costs approximatively as two iterations of GMRES. We see that for FGMRES the circulant preconditioning strategy accelerates the semi-convergence, while for FLSQR it fails to do so. Of course, the behaviour of the preconditioner strictly depends on the choice of $\alpha_k$, for which more sophisticated strategies can be adopted, but this is beyond the scope of this presentation. In Figure \ref{fig:camera_images_reconstructed} we compare the best reconstruction by FGMRES and FLSQR and we notice that FLSQR produces some artifacts on the boundary, which are not present in the FGMRES reconstruction.
	\begin{figure}
		\includegraphics[width=0.9\textwidth]{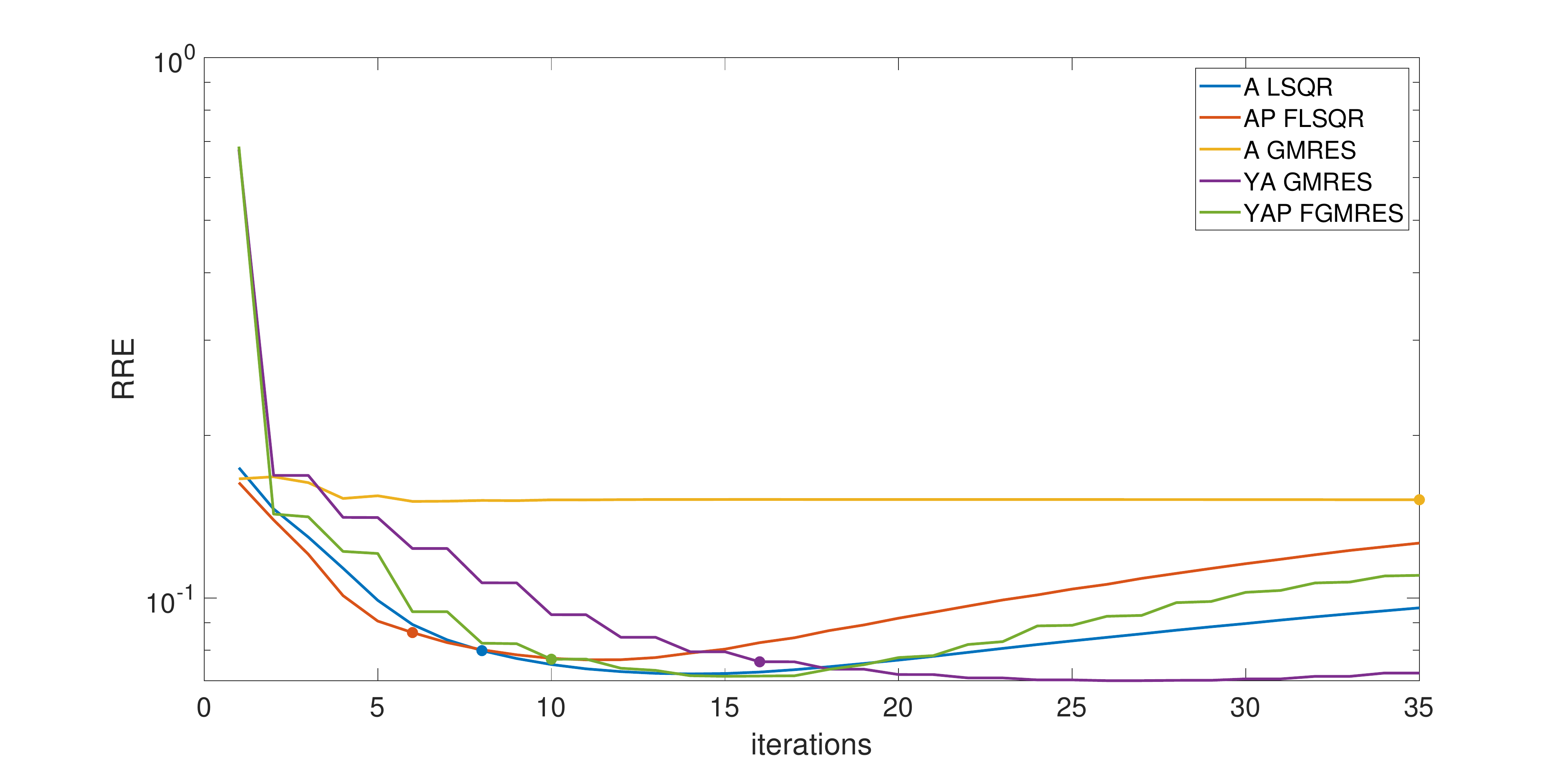}
		\caption{{Cameraman} test problem. Comparison between the error behaviours of the GMRES and FGMRES applied to the symmetrized system \eqref{eq:Ysys} and of the LSQR, FLSQR, and GMRES applied to the non-symmetrized linear system. The dots mark the iterations satisfying the discrepancy principle stopping criterion.}
		\label{fig:errors_camera_prec}
	\end{figure}
	
	\begin{table}
	\centering
		\begin{tabular}{c|ccc|cccc}
		& \multicolumn{3}{c}{\it Best Reconstruction} & \multicolumn{4}{c}{\it Discrepancy Principle}\\
		\hline
		Method & RRE & PSNR & iter & RRE & PSNR & iter & time (s)\\
			\hline
			A LSQR     & 0.0723 & 28.3279 & 14 & 0.0799  & 27.4657  &  8 &  1.3023\\
			AP FLSQR   & 0.0768 & 27.8058 & 11 & 0.0863  & 26.7965  &  6 &  1.3191\\
			A GMRES    & 0.1509 & 21.9410 &  6 & - & - & - & - \\           
			YA GMRES   & 0.0703 & 28.5773 & 26 & 0.0762  & 27.8791  & 16 &  1.4398\\
			YAP FGMRES & 0.0716 & 28.4157 & 15 & 0.0770  & 27.7841  & 10 &  1.2180\\
		\end{tabular}
		\caption{Cameraman test problem. RRE and PNSR values with the corresponding iteration numbers and computational times for the restoration with minimum RRE and for the restoration determined when terminating the iterations with the discrepancy principle.}\label{tab:times_psnr_camera}
	\end{table}
	\begin{figure}
		\includegraphics[width=0.3\textwidth]{Figures/camera_image_true.pdf}
		\includegraphics[width=0.3\textwidth]{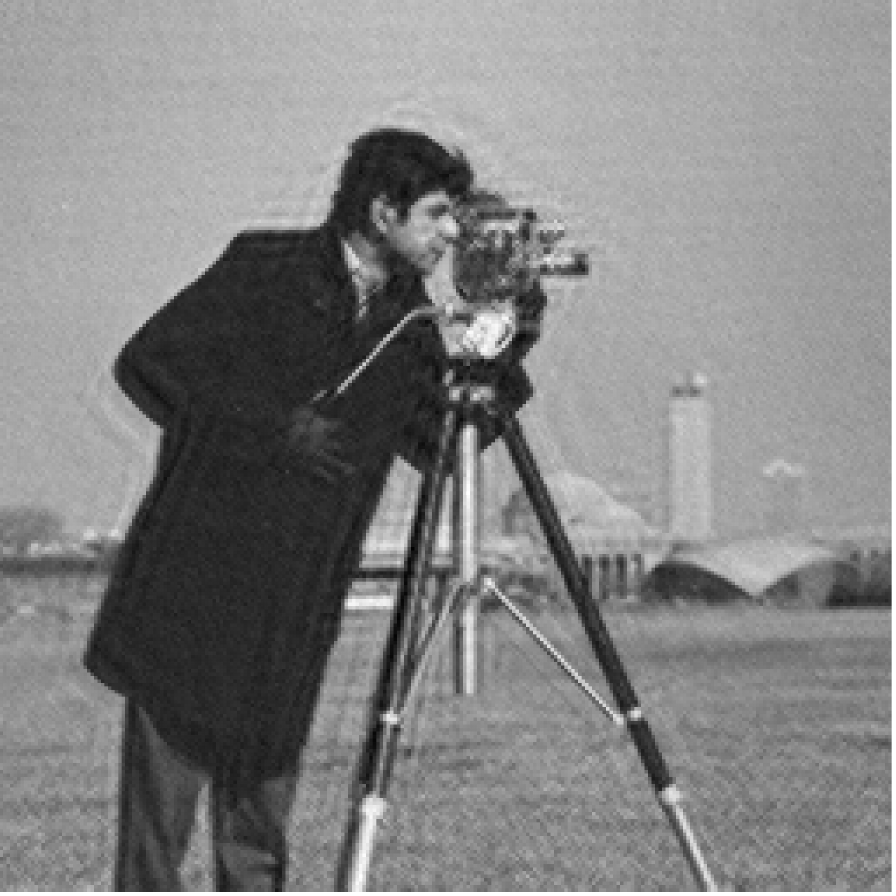}
		\includegraphics[width=0.3\textwidth]{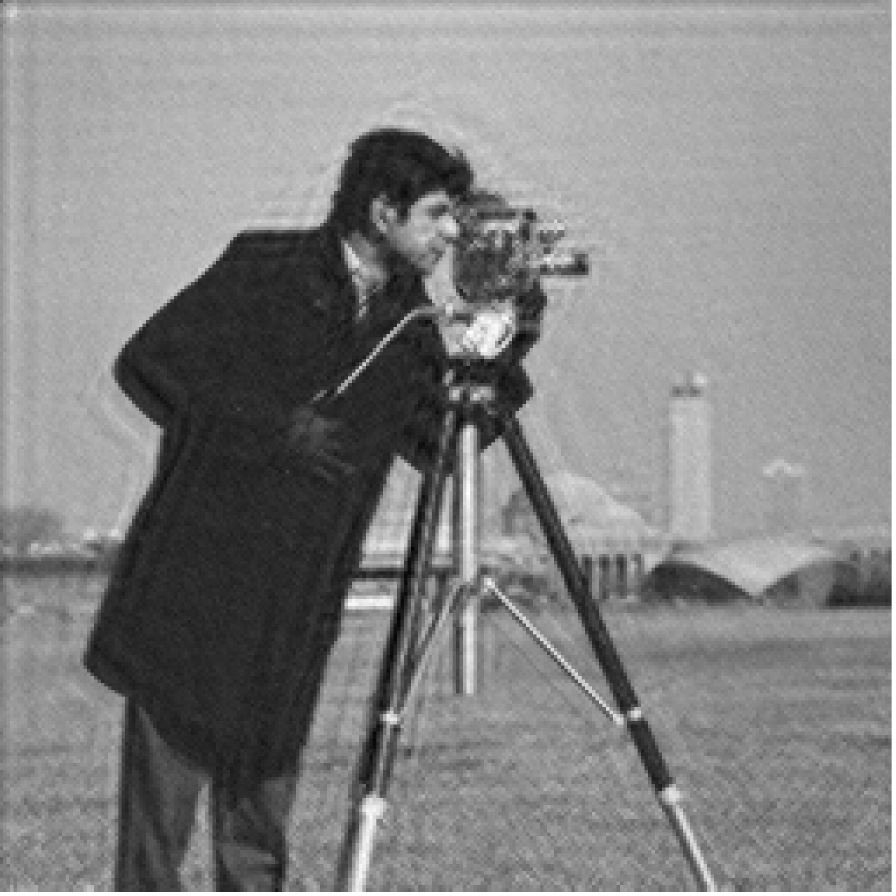}
		\caption{Cameraman test problem. Exact image (left), preconditioned GMRES reconstruction (center), preconditioned LSQR reconstruction (right) given by the discrepancy principle.}
		\label{fig:camera_images_reconstructed}
	\end{figure}

		\subsection{Test Problem: Edges with reflective boundary conditions}\label{ssec:edges}
	In this case, we analyse the reconstruction of a sparse image containing the edges of geometrical figures contaminated by a severe shake blur and a noise of level $\sigma=0.1$. Figure \ref{fig:edges_images} shows the exact image, the PSF, and the blurred image, all of size $128\times 128$ pixels, in a colormap that better emphasises the light intensity of the grayscale images.
		\begin{figure}
		\includegraphics[width=0.3\textwidth]{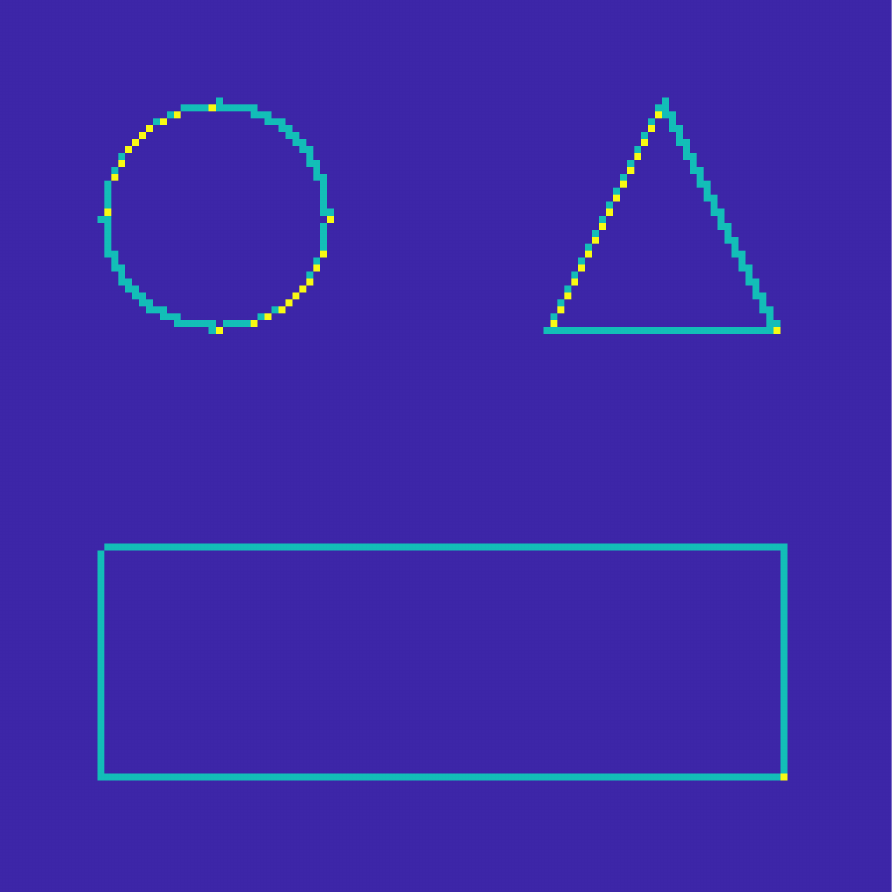}
		\includegraphics[width=0.3\textwidth]{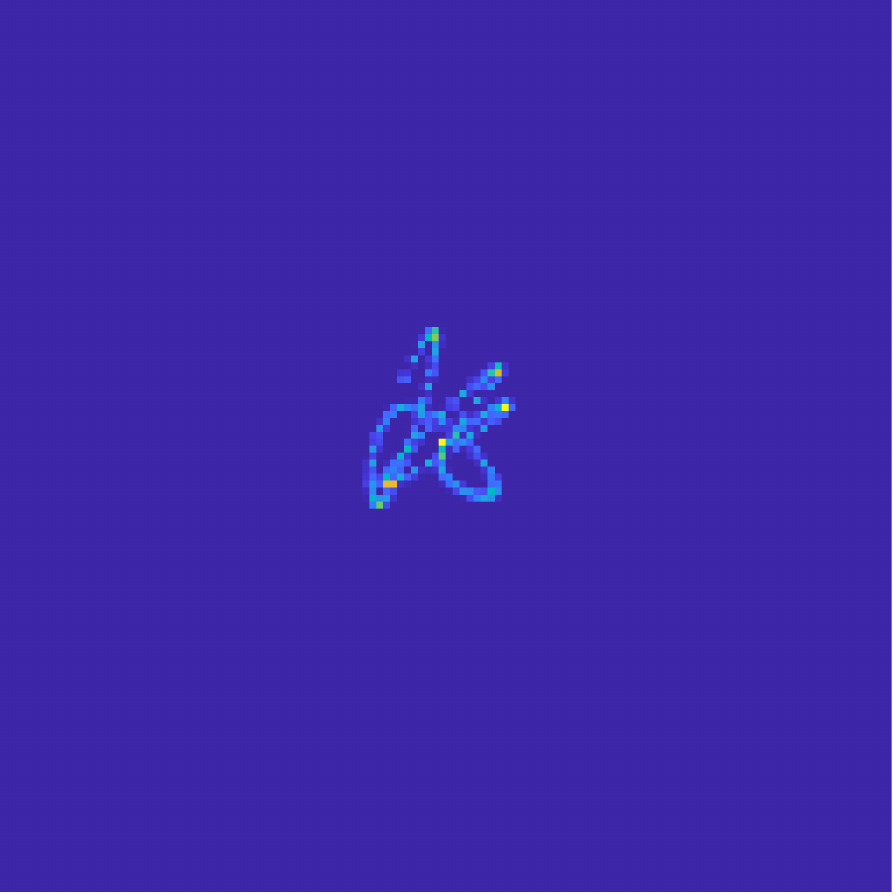}
		\includegraphics[width=0.3\textwidth]{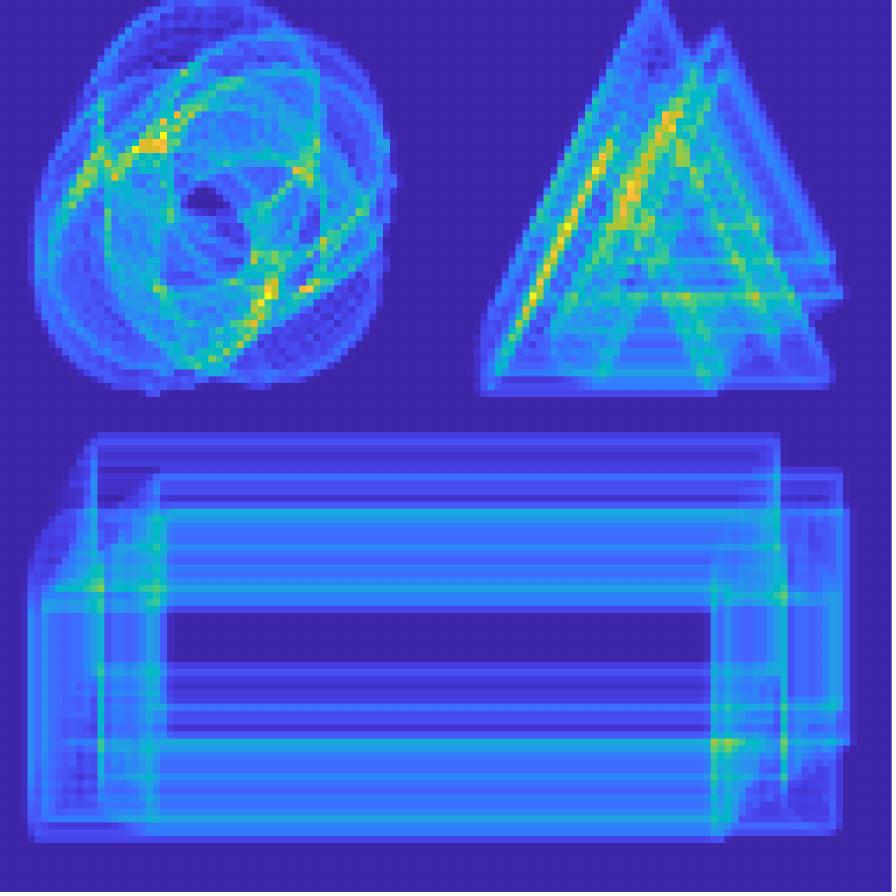}
		\caption{Edges test problem. From left to right: exact image, PSF, and blurred image.}
		\label{fig:edges_images}
	\end{figure}
	
	In Figure \ref{fig:errors_edges_prec}, we report the error behaviour of FGMRES applied to the symmetrized system \eqref{eq:Ysys} and of FLSQR and FGMRES applied to the original linear system \eqref{original}. We consider two different iteration-dependent preconditioners for FGMRES and FLSQR. The $W$ preconditioner implements the re-weighting strategy from \cite{MR4331956} used to enforce sparsity in the solution, as explained in Section \ref{sec:itReg}. Then, we combine the latter strategy with the circulant preconditioning technique of $\cir_n(|p_{\alpha_k}|)$ and $\cir_n(p_{\alpha_k})$, with $\alpha_k=0.1\cdot(0.8)^k$. To our knowledge, this is the first time that the combination of these two preconditioners has been explored with FLSQR. Between the two options of preconditioners $WP$ and $PW$, we choose $WP$ since it is the choice that enforces sparsity within the basis vectors of the approximation subspace for the solution. In support of this statement, in Figure \ref{fig:edges_krylov_basis} we show the first four, the eighth and the twelfth basis vectors of the flexible Krylov subspaces generated when applying FGMRES with the two different preconditioning strategies.
 
    Table \ref{tab:times_psnr_edges} shows the RRE and PNSR values with the corresponding iteration numbers and computational times for the restoration with minimum RRE and for the restoration determined when terminating the iterations with the discrepancy principle.
	
	Firstly, this test highlights that the sparse preconditioning technique from \cite{MR4331956}, which fails in combination with FGMRES when the matrix $A$ is highly non-symmetric as in this case, is instead a valid choice when applied to the matrix $YA$: the $YAW$ FGMRES is slightly better than the $AW$ LSQR method in terms of RRE and PSNR of the discrepancy principle reconstruction and it reaches the stopping criterion in comparable computational times. Moreover, regarding the combination with the circulant preconditioner, we see that with this choice of $\alpha_k$ the semi-convergence of both FGMRES and FLSQR is accelerated, but what is most remarkable is that for FGMRES the convergence becomes also more stable.
	
	Finally, we see in Figure \ref{fig:edges_images_reconstructed} that the reconstructions of $AW$ LSQR and $YAW$ FGMRES are comparable from a viewer's perspective.
	\begin{figure}
		\includegraphics[width=0.9\textwidth]{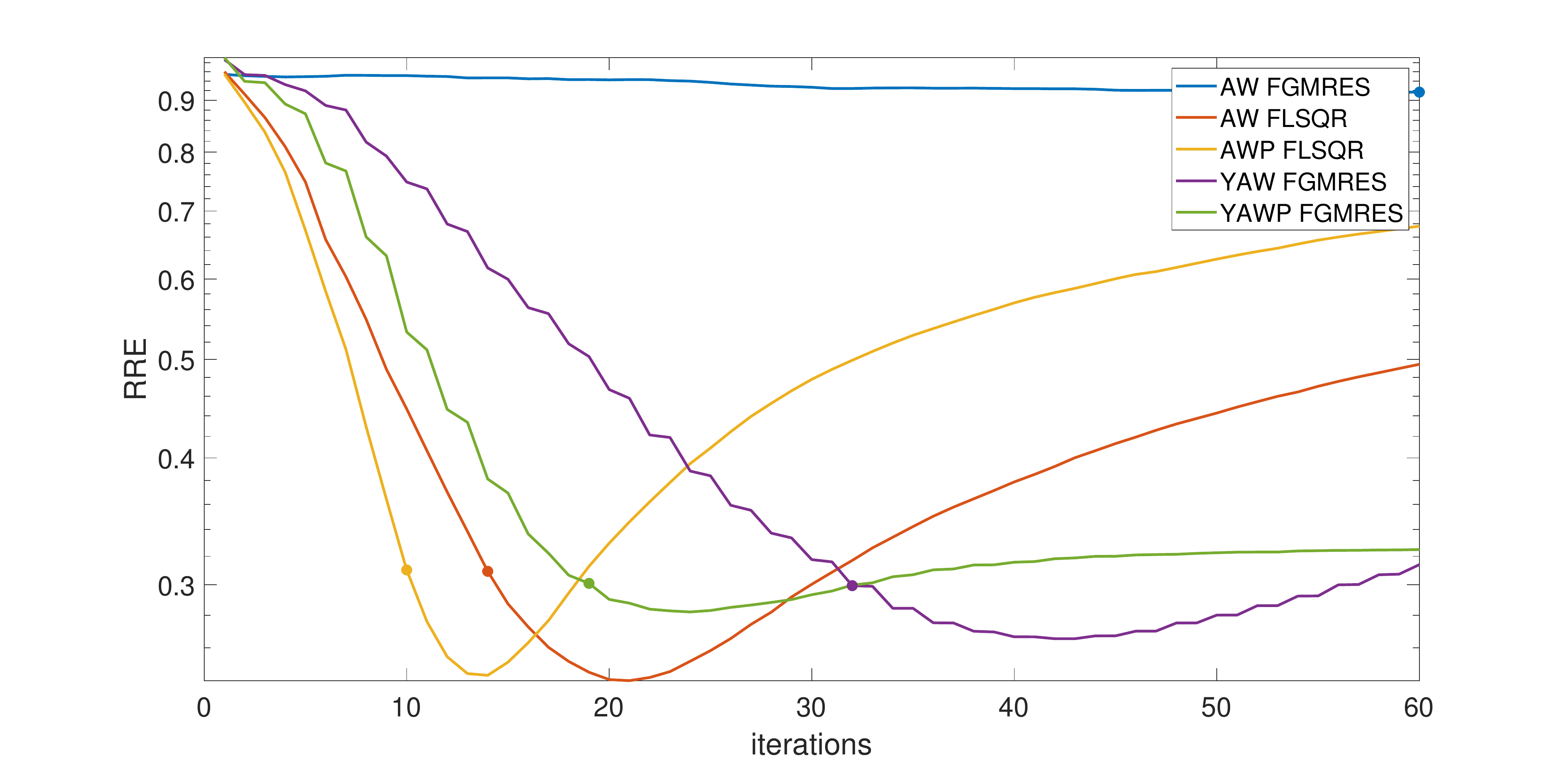}
		\caption{{Edges} test problem. Comparison between the error behaviours of FGMRES applied to the symmetrized system \eqref{eq:Ysys} and of FLSQR and FGMRES applied to the non-symmetrized linear system. The dots mark the iterations satisfying the discrepancy principle stopping criterion.}
		\label{fig:errors_edges_prec}
	\end{figure}
	
	\begin{table}
	\centering
		\begin{tabular}{c|ccc|cccc}
		& \multicolumn{3}{c}{\it Best Reconstruction} & \multicolumn{4}{c}{\it Discrepancy Principle}\\
		\hline
		Method & RRE & PSNR & iter & RRE & PSNR & iter & time (s)\\
			\hline
			AW FLSQR     & 0.2414 & 32.7000 & 21 & 0.3093 & 30.5465 & 14 &  0.8622 \\
			APW FLSQR    & 0.2443 & 32.5954 & 14 & 0.3103 & 30.5201 & 10 &  0.8258\\
			AW FGMRES    & 0.9166 & 21.1105 & 60 & - & - & - & - \\   
			YAW FGMRES   & 0.2655 & 31.8737 & 42 & 0.2994 & 30.8293 & 32 &  0.9335 \\
			YAPW FGMRES  & 0.2821 & 31.3466 & 24 & 0.3010 & 30.7835 & 19 &  0.8536 \\
		\end{tabular}
		\caption{Edges test problem. RRE and PNSR values with the corresponding iteration numbers and computational times for the restoration with minimum RRE and for the restoration determined when terminating the iterations with the discrepancy principle.}\label{tab:times_psnr_edges}
	\end{table}
 	\begin{figure}
            \begin{center}
		\includegraphics[width=\textwidth]{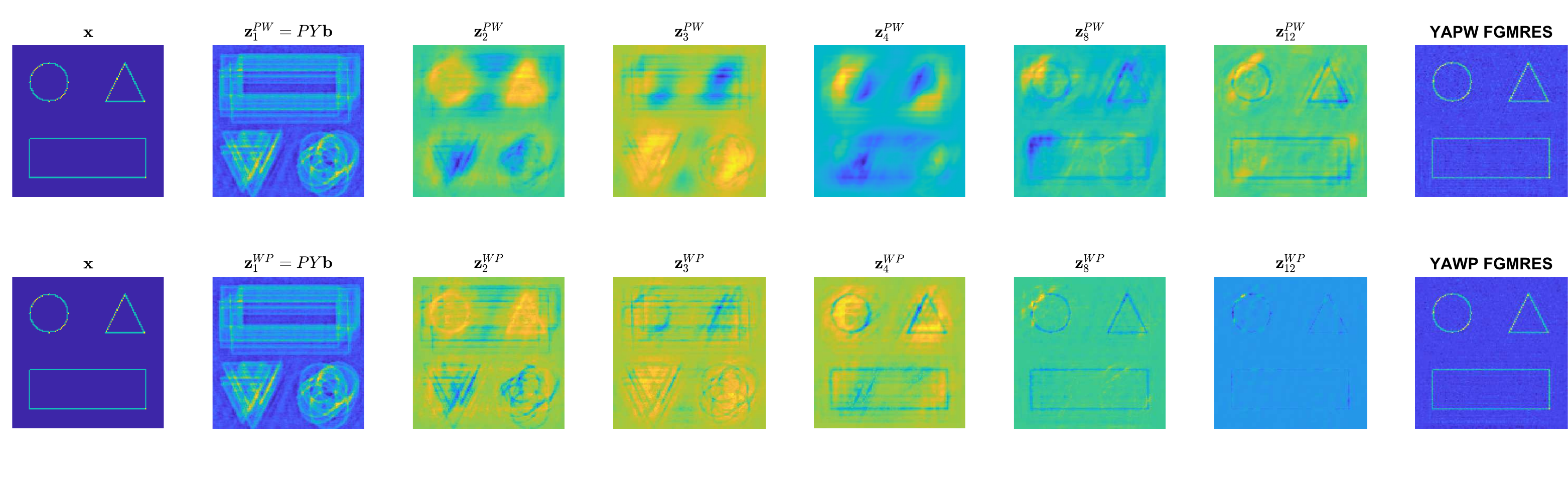}
		\caption{Edges test problem. \textbf{Row 1:} the true image $\mathbf{x}$, the basis images number 1, 2, 3, 4, 8, and 12 of the flexible Krylov subspace, and the reconstruction returned by the discrepancy principle when applying FGMRES to with a $PW$ iteration-dependent right preconditioning strategy. \textbf{Row 2:} the true image $\mathbf{x}$, the basis images number 1, 2, 3, 4, 8, and 12 of the flexible Krylov subspace, and the reconstruction returned by the discrepancy principle when applying FGMRES with a $WP$ iteration-dependent right preconditioning strategy.}
		\label{fig:edges_krylov_basis}
            \end{center}
	\end{figure}
	\begin{figure}
		\includegraphics[width=0.3\textwidth]{Figures/edges_image_true.pdf}
		\includegraphics[width=0.3\textwidth]{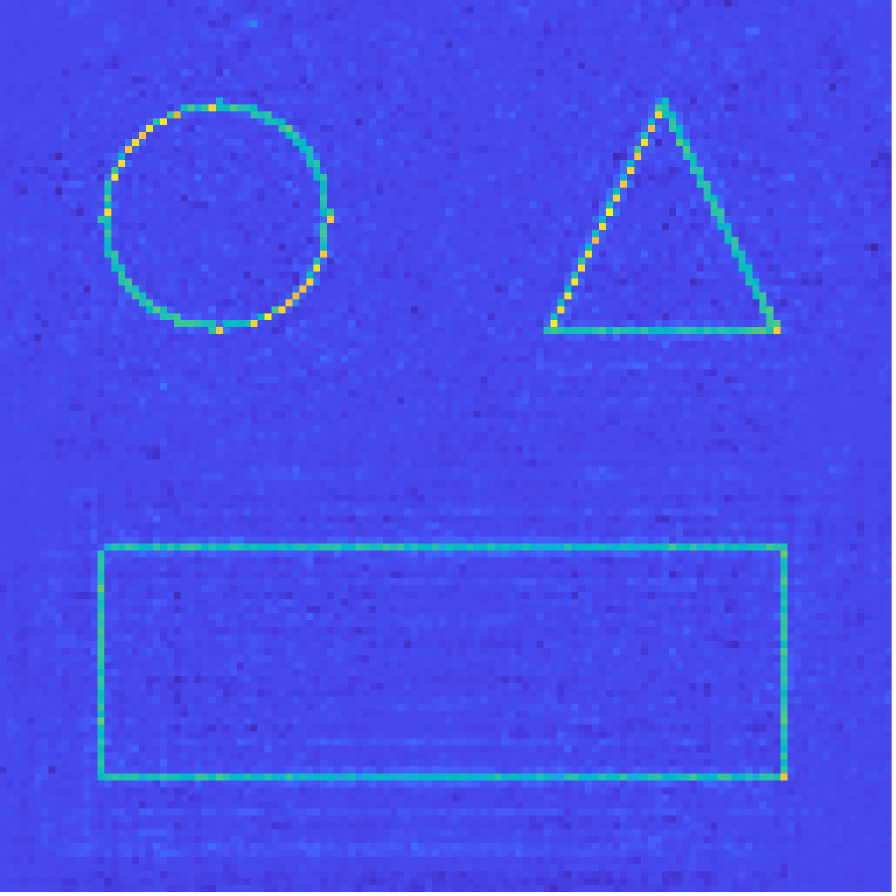}
		\includegraphics[width=0.3\textwidth]{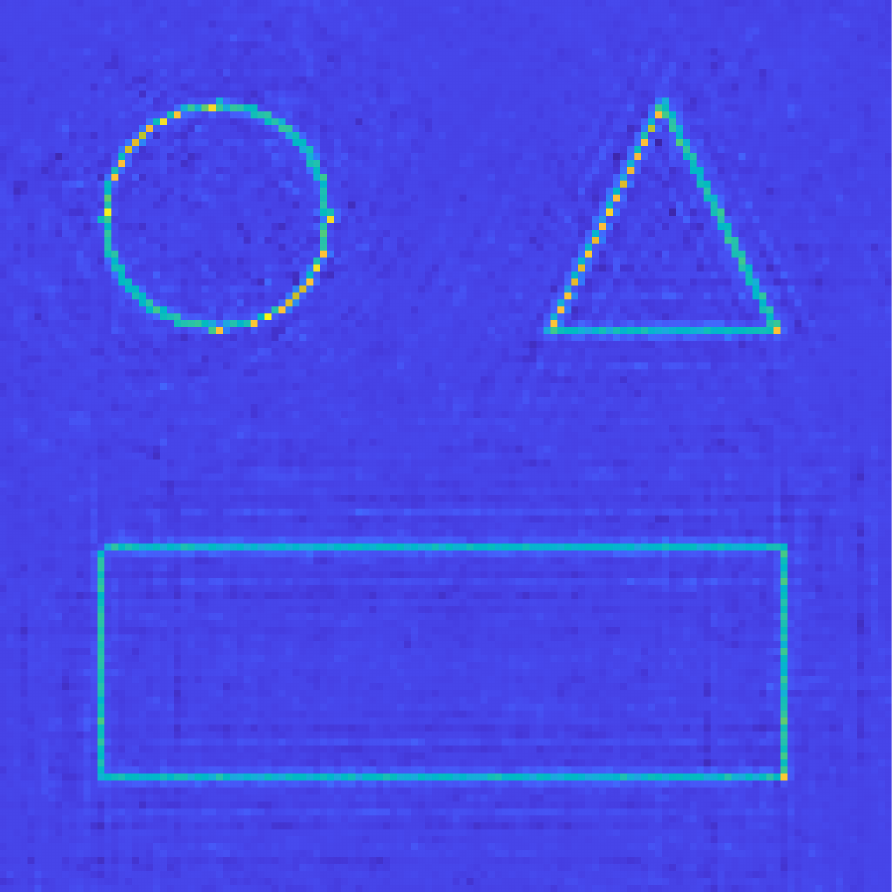}
		\caption{Edges test problem. Exact image (left), $YAW$ FGMRES discrepancy principle reconstruction (center), and $AW$ FLSQR discrepancy principle reconstruction (right).}
		\label{fig:edges_images_reconstructed}
	\end{figure}

\section{Conclusions and future work}\label{sec:end} This paper described a couple of preconditioning strategies to be used when solving image deblurring problems through iterative regularization methods. Some theoretical insight and substantial numerical experimentation showed that using such preconditioners within the MINRES, GMRES, FGMRES and FLSQR methods and with a variety of PSFs and boundary conditions leads to better reconstructions, which are computed more efficiently and often in a more stable way with respect to their unpreconditioned counterparts. 

There are a number of extensions to the present work. First of all, as hinted in Section \ref{sec:itReg}, one may consider different adaptive ways of setting the regularization parameter $\alpha$ within the regularizing circulant preconditioner $\cir_n(|p_\alpha|)$, including strategies based on the discrepancy principle and generalized cross validation. Secondly, although this paper focused on purely iterative regularization methods, all the Krylov subspace methods considered here can be used in a hybrid fashion, i.e., combining projection onto Krylov subspaces and Tikhonov regularization; see \cite{newsurvey}. An interesting new research avenue may explore the performance of the preconditioners considered here in a hybrid setting. Finally, when employing Krylov methods for the computation of sparse solutions, there exists popular alternatives to the flexible methods considered here, which are based on generalized Krylov subspaces; see \cite{Lothar1, Lothar2}. Future work can focus on the numerical study and analysis of preconditioning techniques based on regularizing circulant preconditioners applied to such methods.

\appendix
\section{Preconditioner Analysis Proof}\label{appendixGLT}
The proof of Theorem \ref{thm:preconditioner_cluster} is based on a distribution result in \cite{MR4304085}, which in turn exploits the concept of Generalized Locally Toeplitz (GLT) sequences.  The formal definition of the GLT class and the derivation of their properties require rather technical tools. We refer the reader to \cite{mbg} for a discussion on the GLT theory in its general multilevel block form. 

The crucial feature of a GLT sequence $\{A_n\}_n$ is that we can associate it to a function {$\tilde{f}:[0,1]^k\times [-\pi,\pi]^k \rightarrow \mathbb{C}^{s \times s}$, called GLT symbol. We denote this relation with $\{A_n\}_n \sim_{\textsc{glt}} \tilde{f}$. If all the matrices of the sequence are Hermitian, then $\tilde{f}$ is the eigenvalue symbol of $\{A_n\}_n$ in a sense analogous to formula \eqref{eq:spectral_distribution}. Every $k$-level Toeplitz sequence generated by a function $f \in L^1([-\pi,\pi]^k)$ is a GLT sequence and its symbol is {$\tilde{f}(\cdot,\boldsymbol{\vartheta}) = f(\boldsymbol{\vartheta})$}. For image deblurring problems $k=2$ and a 2-level Toeplitz matrix is a BTTB matrix.

GLT sequences constitute a $*$-algebra of matrix-sequences to which multilevel Toeplitz matrix-sequences with Lebesgue integrable generating functions belong. The sequence obtained via algebraic operations on a finite set of given GLT sequences is still a GLT sequence and its symbol is obtained by performing the same algebraic manipulations on the corresponding symbols of the input GLT sequences.

For the proof of the next theorem, we use the results in \cite{MR4304085}, where the authors make use of the GLT theory to discover the spectral distribution of $\{Y\toep_n(g)\}_{n}$ and of the preconditioned sequence $\{P_n^{-1}Y\toep_n(g)\}_{n}$.
\begin{theorem}
		Let $\varepsilon$ and $\tau$ be positive values such that $\varepsilon\in(0,1)$ and $\tau\in (0,\pi)$. Let $f\in L^1([-\pi,\pi]^2)$  be a bivariate function with real Fourier coefficients,  periodically extended to the whole real plane, and such that
		\begin{equation}
			\begin{cases}
				|f(\vartheta_1,\vartheta_2)| > {\varepsilon}, &  \mbox{if }\left|\vartheta_1^2+\vartheta_2^2\right|<\tau,\\
				|f(\vartheta_1,\vartheta_2)| \le \varepsilon, 		& \mbox{otherwise},
			\end{cases}
		\end{equation}
		and define 
		\begin{equation}
			g_\tau(\vartheta_1,\vartheta_2) = 
			\begin{cases}
				|{f(\vartheta_1,\vartheta_2)}|, &  \mbox{if }\left|\vartheta_1^2+\vartheta_2^2\right|<\tau,\\
				1, 		& \mbox{otherwise}.
			\end{cases}
		\end{equation}
Then
		\begin{equation}\label{eqn:prec_distribution_app}
			\left\{\cir_n(g_\tau)^{-1}Y\toep_n(f)\right\}_n \sim_\lambda  \psi,
		\end{equation}
where
		\begin{equation}\label{eqn:prec_symbol_app}
			\psi(\vartheta_1,\vartheta_2) = 
			\begin{cases}
	            \frac{1}{|f|}\begin{bmatrix} 0 & f(\vartheta_1,\vartheta_2) \\ \\ \overline{f(\vartheta_1,\vartheta_2)} & 0 \end{bmatrix}, &  \mbox{if }\left|\vartheta_1^2+\vartheta_2^2\right|<\tau,\\ \\
				\begin{bmatrix} 0 & f(\vartheta_1,\vartheta_2) \\ \\ \overline{f(\vartheta_1,\vartheta_2)} & 0 \end{bmatrix}, & \mbox{otherwise}.
			\end{cases}
		\end{equation}
	\end{theorem}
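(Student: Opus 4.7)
The plan is to invoke the block Generalized Locally Toeplitz (\textsc{glt}) calculus and to reduce the computation to the distribution result for $\{Y\toep_n(g)\}_n$ already established in \cite{MR4304085}. First I would recognize each factor as a \textsc{glt} sequence and read off its symbol: the circulant sequence $\{\cir_n(g_\tau)\}_n$ is \textsc{glt} with scalar symbol $g_\tau$ because any circulant sequence generated by an $L^1$ function is \textsc{glt} with that symbol; the block sequence $\{Y\toep_n(f)\}_n$ is \textsc{glt} with the $2\times 2$ matrix-valued symbol $\phi_f(\vartheta_1,\vartheta_2)=\begin{bmatrix} 0 & f \\ \overline{f} & 0 \end{bmatrix}$, which is precisely the content of the main result recalled from \cite{MR4304085}.

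Next, I would combine the two building blocks via the inversion and product rules of the $*$-algebra of \textsc{glt} sequences. The hypothesis $|f|>\varepsilon$ on the disk $\{\vartheta_1^2+\vartheta_2^2<\tau\}$ together with $g_\tau=1$ on its complement gives $g_\tau\ge\varepsilon>0$ almost everywhere, so $1/g_\tau\in L^\infty$ and the inversion closure of the \textsc{glt} algebra yields $\{\cir_n(g_\tau)^{-1}\}_n\sim_{\textsc{glt}} 1/g_\tau$. Embedding the scalar $1/g_\tau$ as $g_\tau^{-1}I_2$ to match the $2\times 2$ block structure, the product rule gives
\[
\{\cir_n(g_\tau)^{-1}Y\toep_n(f)\}_n \sim_{\textsc{glt}} \frac{1}{g_\tau}\begin{bmatrix} 0 & f \\ \overline{f} & 0 \end{bmatrix},
\]
which on substituting the piecewise definition of $g_\tau$ becomes exactly the $\psi$ of \eqref{eqn:prec_symbol_app}.

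The most delicate step, and the one I expect to be the main obstacle, is upgrading the \textsc{glt} distribution to a genuine \emph{eigenvalue} distribution, because the preconditioned matrices $\cir_n(g_\tau)^{-1}Y\toep_n(f)$ are not Hermitian. My plan is to exploit the fact that, because $f$ has real Fourier coefficients, $|f|$ is even in $(\vartheta_1,\vartheta_2)$; the disk is symmetric about the origin, so $g_\tau$ is also even, which makes $\cir_n(g_\tau)$ real symmetric. Combined with $g_\tau\ge\varepsilon>0$, the circulant preconditioner is positive definite for all sufficiently large $n$, so $\cir_n(g_\tau)^{1/2}$ is well defined and symmetric. I would then pass to the similar matrix
\[
B_n := \cir_n(g_\tau)^{-1/2}\, Y\toep_n(f)\, \cir_n(g_\tau)^{-1/2},
\]
which is symmetric (using that $\toep_n(f)$ is persymmetric with real entries, so $Y\toep_n(f)$ is symmetric). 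By the algebra property $\{B_n\}_n$ is a Hermitian \textsc{glt} sequence with the same symbol $\psi$, and the standard \textsc{glt} fact that a Hermitian \textsc{glt} sequence is spectrally distributed as its symbol yields $\{B_n\}_n\sim_\lambda\psi$. Since $\cir_n(g_\tau)^{-1}Y\toep_n(f)$ and $B_n$ are similar, they share their spectrum at each $n$, and the conclusion follows. The technical care required here is in verifying the closure and invertibility hypotheses of the block-\textsc{glt} framework (especially the uniform positivity of $\cir_n(g_\tau)$ needed to form its square root for all $n$ large enough), for which one relies on the precise theorems in \cite{mbg} and the preconditioning analysis of \cite{MR4304085}.
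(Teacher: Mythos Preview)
Your overall plan is sound in spirit, but there is a genuine gap at the step where you write ``Embedding the scalar $1/g_\tau$ as $g_\tau^{-1}I_2$ to match the $2\times 2$ block structure, the product rule gives\ldots''. The difficulty is that the $2\times 2$ block \textsc{glt} structure on $\{Y\toep_n(f)\}_n$ is not intrinsic: it arises only after applying a specific pair of permutation matrices $\Pi_n,U_n$ (defined in \cite{MR4304085}) that rearrange $Y\toep_n(f)$ into something close to a genuine $2$-block Toeplitz matrix. For the product rule in the block \textsc{glt} $*$-algebra to apply, you must verify that \emph{the same} permutations carry $\cir_n(g_\tau)$ into a block \textsc{glt} sequence with symbol $g_\tau I_2$; this is not automatic, and it is precisely the hypothesis that has to be checked. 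Relatedly, your assertion that $\{Y\toep_n(f)\}_n$ ``is \textsc{glt}'' with symbol $\phi_f$ conflates $\sim_\lambda$ (the spectral distribution established in \cite{MR4304085}) with $\sim_{\textsc{glt}}$; the latter is proved only for the permuted sequence, and \textsc{glt} membership is not preserved under arbitrary permutations.

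The paper sidesteps redoing the product/inversion calculus and instead invokes \cite[Theorem~3.3]{MR4304085} directly: that result already gives the spectral distribution of $\{P_n^{-1}Y\toep_n(f)\}_n$ for Hermitian positive definite $P_n$, provided (i) $\{P_n\}_n\sim_{\textsc{glt}} g_\tau$ and (ii) $\{\Pi_n U_n P_n U_n \Pi_n^T\}_n\sim_{\textsc{glt}} g_\tau I_2$. The whole proof then reduces to checking (i) and (ii) for $P_n=\cir_n(g_\tau)$, which is done via $\{\cir_n(g_\tau)-\toep_n(g_\tau)\}_n\sim_{\textsc{glt}}0$ together with the known \textsc{glt} facts for $\toep_n(g_\tau)$ and its permuted version (using that $g_\tau$ is real-valued). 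There is also a subtlety you pass over: $g_\tau$ is not a trigonometric polynomial, so $\cir_n(g_\tau)$ is not literally a Strang preconditioner and your blanket claim that ``any circulant sequence generated by an $L^1$ function is \textsc{glt} with that symbol'' needs justification; the paper handles this in a separate remark by explaining that the eigenvalues of the circulant actually used are a uniform sampling of $g_\tau$, whence the needed zero-distributed difference. Your similarity trick via $B_n=\cir_n(g_\tau)^{-1/2}\,Y\toep_n(f)\,\cir_n(g_\tau)^{-1/2}$ is perfectly reasonable and is essentially how such theorems are proved internally, but it does not bypass the need to establish condition~(ii).
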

	\begin{proof}
		Since $g_\tau$ is a strictly positive function, the matrices $\cir_n(g_\tau)$ are Hermitian positive definite for all $n$.
		According to \cite[Theorem 3.3]{MR4304085}, the spectral distribution \eqref{eqn:prec_distribution_app} holds under the assumptions
		\begin{enumerate}
		 	\item $\{\cir_n(g_\tau)\}_{n} \sim_{\textsc{glt}} g_\tau$;
		 	\item $\{\Pi_n U_n \cir_n(g_\tau) U_n \Pi_n^T\}_{n} \sim_{\textsc{glt}} g_\tau I_2$.
		\end{enumerate}
		where $\Pi_n$ and $U_n$ are the permutation matrices defined in \cite{MR4304085}.
		In order to prove 1. and 2., we state the following results:
		\begin{itemize}
			\item $\{\toep_n(g_\tau)\}_{n} \sim_{\textsc{glt}} g_\tau$, which is a GLT property;
			\item $\{\Pi_n U_n \toep_n(g_\tau) U_n \Pi_n^T\}_{n} \sim_{\textsc{glt}} g_\tau I_2$, which holds thanks to \cite[Remark 4]{MR4304085} and the fact that $g_\tau$ is real-valued;
			\item $\{\cir_n(g_\tau)-\toep_n(g_\tau)\}_{n} \sim_{\textsc{glt}} 0$; see Remark \ref{rmk:circulant_preconditioners}.
		\end{itemize}
		Combining the latter statements with the $*$-algebra property of GLT sequences, we deduce that 1. and 2. hold.
		Hence, $\left\{\cir_n(g_\tau)^{-1}Y\toep_n(f)\right\}_n \sim_\lambda  \psi$ and the proof is complete.
	\end{proof}
	\begin{remark}\label{rmk:circulant_preconditioners}
		For brevity's sake, in Section \ref{sec:structured_matrices} we stated that $\cir_n(f)$ is the Strang preconditioner of $\toep_n(f)$. This is true if $f$ is a trigonometric polynomial for $n$ large enough, which is the case of the functions $f$ related to the blurring operators that we are considering. The Strang preconditioner might not be a feasible choice for general functions $f\in L^1$; see \cite{MR2374567} for details. However, the Strang preconditioner that we consider for $g_\tau$ is related to the Strang preconditioner constructed from trigonometric polynomial. Indeed, this is the procedure to construct the preconditioner:
		\begin{enumerate}
			\item from the trigonometric polynomial $f$ find the eigenvalues of the Strang preconditioner $\cir_n(f)$;
			\item set to 1 the eigenvalues that are equal to or less than $\varepsilon$;
			\item construct the circulant matrix with the eigenvalues computed in Step 2.
		\end{enumerate}
		So, the eigenvalues of the circulant preconditioner are actually a uniform sampling of the function $g_\tau$ and hence the distribution $\{\cir_n(g_\tau)-\toep_n(g_\tau)\}_{n} \sim_{\textsc{glt}} 0$ holds, where the the notation $\cir_n(g_\tau)$ is an abuse of notation since, in general, $\cir_n(g_\tau)$ is neither the Strang preconditioner nor the circulant matrix generated by $g_\tau$, but it well-approximates both in the case where the Fourier coefficients of $g_\tau$ decay rapidly.
	\end{remark}

\section{Acknowledgments}
The work of the first and second authors was partially supported by the Gruppo Nazionale per il Calcolo Scientifico (GNCS).

\bibliographystyle{siamplain}
\bibliography{references}
\end{document}